\newtheorem{theorem}{Theorem}[section]
\newtheorem{lemma}{Lemma}[section]
\newtheorem{assump}{Assumption}[section]
\newtheorem{remark}{Remark}[section]
\newcommand{\trace}{{\rm Tr}}
\newcommand{\st}{{~\rm s.t.}}
\newcommand{\bI}{\mathbf{I}}
\newcommand{\bP}{\mathbf{P}}
\newcommand{\bQ}{\mathbf{Q}}
\newcommand{\bzero}{\bm{0}}
\newcommand{\bA}{\mathbf{A}}
\newcommand{\bB}{\mathbf{B}}
\newcommand{\bE}{\mathbf{E}}
\newcommand{\bU}{\mathbf{U}}
\newcommand{\bM}{\mathbf{M}}
\newcommand{\bN}{\mathbf{N}}
\newcommand{\bX}{\mathbf{X}}
\newcommand{\cX}{\mathcal{X}}
\newcommand{\tX}{\tilde{\bX}}
\newcommand{\Xt}{\tilde{X}}
\newcommand{\bY}{\mathbf{Y}}
\newcommand{\bSigma}{\mathbf{\Sigma}}
\newcommand{\mI}{\mathcal{I}}
\newcommand{\bq}{\bm{q}}
\newcommand{\bx}{\bm{x}}
\newcommand{\tx}{\tilde{\bx}}
\newcommand{\by}{\bm{y}}
\newcommand{\bz}{\bm{z}}
\newcommand{\bb}{\bm{b}}
\newcommand{\bd}{\bm{d}}
\newcommand{\Rdom}{\mathbb{R}}
\newcommand{\expect}{\mathbb{E}}
\newcommand{\bone}{\bm{1}}
\newcommand{\norm}[1]{\left\Vert#1\right\Vert}
\begin{document}

\title{Inexact Block Coordinate Descent Methods For Symmetric Nonnegative Matrix Factorization}

\author{
\authorblockN{Qingjiang Shi, Haoran Sun, Songtao Lu, Mingyi Hong, Meisam Razaviyayn}
\thanks{Q. Shi is now with the Dept. of Industrial and Manufacturing Systems Engineering, Iowa State University, IA 50011, USA. He is also with the School of Info. Sci. \& Tech., Zhejiang Sci-Tech University, Hangzhou 310018, China. Email: qing.j.shi@gmail.com}
\thanks {H. Sun, S. Lu and M. Hong are all with the Dept. of Industrial and Manufacturing Systems Engineering, Iowa State University, IA 50011, USA. Email: \{mingyi, hrsun, songtao\}@iastate.edu}
\thanks {M. Razaviyayn is with the Dept. of Electrical and Computer Engineering,
Stanford University, Standford 94305, USA. Email: meisamr@stanford.edu.}
}


\maketitle

\begin{abstract}
Symmetric nonnegative matrix factorization (SNMF) is equivalent to computing a symmetric
nonnegative low rank approximation of a data similarity matrix. It inherits the good data interpretability of the well-known nonnegative matrix factorization technique and have better ability of clustering nonlinearly separable data. In this paper, we focus on the algorithmic aspect of the SNMF problem and propose simple inexact block coordinate decent methods to address the problem, leading to both serial and parallel  algorithms. The proposed algorithms have guaranteed stationary convergence and can efficiently handle large-scale  and/or sparse SNMF problems. Extensive simulations verify the effectiveness of the proposed algorithms compared to recent state-of-the-art algorithms.
\end{abstract}
\begin{keywords}
Symmetric nonnegative matrix factorization, block coordinate decent, block successive upper-bounding minimization, parallel algorithm, stationary convergence.
\end{keywords}

\IEEEpeerreviewmaketitle

\section{Introduction}
Clustering is the task of grouping a set of data points into different clusters according to some measure of data similarity. It is a common technique for statistical data analysis and has been widely used in the fields such as machine learning, pattern recognition and data compression, etc.. In some applications, clustering is performed on the data which has inherent nonnegativity. This motivates the great interests in the application of nonnegative matrix factorization (NMF) to clustering. NMF has been shown to be very effective for clustering linearly separable data because of its ability to automatically extract sparse and easily interpretable factors\cite{Lee1999}. As a close relative (or a variant) of NMF, symmetric NMF (SNMF) is more directly related to the clustering problems: it can be even viewed as a relaxed version of two classical clustering methods: $K$-means clustering and spectral clustering\cite{Kuang2012}. It inherits the good interpretability of NMF and has received considerable interests due to its better ability of clustering nonlinearly separable data given a data similarity matrix\cite{Kuang2012,Gill2015,Ding2005,Long2005,Long2007,Huang2014,He2011}. This paper focuses on the algorithmic aspect of SNMF.

The basic SNMF problem is described as follows. Given $n$ data points, a similarity matrix $\bM\in \Rdom^{n\times n}$ can be generated by evaluating the similarity between any two data points in terms of an appropriate similarity metric. The SNMF problem of determining $r$ clusters is to find a low-rank nonnegative factorization matrix $\bX\in \Rdom^{n\times r}$ (with $r\ll n$ generally) such that $\bM\approx\bX\bX^T$. Using Frobenius norm, the basic SNMF problem is often formulated as follows
\begin{equation}\label{eq:SNMF0}
\min_{\bX\geq 0}F(\bX)\triangleq\Vert\bM-\bX\bX^T\Vert^2
\end{equation}
where the nonnegativity constraint $\bX\geq 0$ is a componentwise inequality. Such model often leads to sparse factors which are better interpretable\footnote{With the requirement of nonnegativity on $\bX$, the index of the largest entry of the $i$-th row of $\bX$ can be simply thought of as the cluster label of the $i$-th data points. This greatly facilitates clustering once the symmetric factorization of $\bM$ is obtained.} for problems such as image or document analysis. Problem \eqref{eq:SNMF0} is equivalent to the so-called \emph{completely positive matrix} problem, which postulates whether $\bM$ can be exactly factorized as $\bM=\bX\bX^T$ with $\bX\geq 0$. This problem is originated in inequality theory and quadratic forms\cite{Diana1962,Hall1963} and plays an important role in discrete optimization\cite{Hall1962}. Note that the problem---whether a matrix is completely positive -- has been recently shown to be NP-hard\cite{Hall2013}. As such, problem \eqref{eq:SNMF0} is generally NP-hard and thus efficient numerical algorithms are desired to obtain some high-quality, but not necessarily globally optimal,  solutions.

So far, there is limited algorithmic works on SNMF as compared to the general NMF. Following the popular multiplicative update rule proposed for the NMF problem\cite{Lee1999}, some pioneering works\cite{Ding2005,Long2005,Long2007} on SNMF have proposed various modified multiplicative update rules for SNMF. The multiplicative update rules are usually simple to implement but require the similarity matrix $\bM$ to be at lease nonnegative to ensure the nonnegativity of $\bX$ at each iteration. For the case of \emph{positive definite} and \emph{completely positive} similarity matrix $\bM$, the authors in \cite{He2011} developed three faster parallel multiplicative update algorithms, including the basic multiplicative update algorithm, $\alpha$-SNMF algorithm, and $\beta$-SNMF algorithm, all converging to stationary solutions to the SNMF problem. It was numerically shown that the latter two outperform the first one and other previous multiplicative-update-based SNMF algorithms. It should be stressed that all the above multiplicative-update-based SNMF algorithms \emph{implicitly} assume \emph{positive} $\bX$ at each iteration to derive the corresponding auxiliary functions\cite{Ding2005,Long2005,Long2007,He2011} and require arithmetic division operators in the updates. Consequently, they may not be numerically stable when some entries of $\bX$ reach zero during iterations and cannot even guarantee stationary convergence in some cases.

While the above algorithms require the similarity matrix $\bM$ to be nonnegative, it is noted that the similarity matrix $\bM$ in graph-based clustering may be neither nonnegative nor positive definite (e.g., the matrix $\bM$ could have negative entries in some definition of kernel matrices
used in semi-supervised clustering \cite{Kulis2005}). Hence, the aforementioned algorithms do not apply to the SNMF problem with a general similarity matrix $\bM$. In \cite{Kuang2012}, the authors proposed two numerical optimization methods to find stationary solutions to the general SNMF problem. The first one is a Newton-like algorithm which uses partial second-order information (e.g., reduced Hessian or its variants) to reduce the computational complexity for determining the search direction, while the second one is called alternating nonnegative least square (ANLS) algorithm, where the SNMF problem is split into two classes of nonnegative least-square subproblems by using penalty method coupled with two-block coordinate descent method. Technically, the ANLS algorithm is a penalty method whose  stationary convergence is much impacted by the choice of the penalty parameter. As a result, the ANLS algorithm often comes up with an approximate solution which is generally less accurate than the solution provided by the Newton-like algorithm.

If we view each entry of $\bX$ or a set of entries of $\bX$ as one block variable, the SNMF problem is a multi-block optimization problem. A popular approach for solving multi-block optimization problems is the block coordinate descent (BCD) approach\cite{Bertsekas_book,Wright2015}, where only one of the block variables is optimized at each iteration while the remaining blocks are kept fixed. When the one-block subproblem can be easily solved, the BCD algorithms often perform very efficiently in practice. Furthermore, since only one block is updated at each iteration, the per-iteration
storage and computational burden of the BCD algorithm could be very low, which is desirable for solving large-scale problems. Due to the above merits, the BCD method has been recently used to solve the SNMF problem\cite{Gill2015}, where each entry of $\bX$ is viewed as one block variable and the corresponding subproblem is equivalent to finding the best root of a cubic equation. The proposed BCD algorithm in \cite{Gill2015} was designed in an efficient way by exploiting the structure of the SNMF problem. However, since the subproblem of updating each entry of $\bX$ may have multiple solutions, the existing convergence results cannot be applied to the proposed BCD algorithm in \cite{Gill2015}. Though much effort was made in \cite{Gill2015} to prove the convergence of the proposed algorithm to stationary solutions, we find that there exists a gap in the convergence proof of Theorem 1 in \cite{Gill2015} (see Sec. II-A.2)).

To overcome the weakness of the BCD algorithm and make it more flexible, an inexact BCD algorithm, named block successive upper-bounding minimization (BSUM)  algorithm, was proposed for multi-block minimization problems\cite{Razav2013}, where the block variables are updated by successively minimizing
a sequence of approximations of the objective function which are either locally tight upper bounds of the objective function or strictly convex local approximations of the objective function. Unlike the classical BCD method which requires for convergence the uniqueness of the minimizer of every one-block subproblem, the BSUM algorithm  and its variants or generalizations are guaranteed to achieve convergence to stationary solutions under very mild conditions\cite{Razav2013}. Considering these merits of the BSUM algorithm, this paper proposes BSUM-based algorithms to address (regularized) SNMF problems.

The contribution of this paper is threefold:
\begin{itemize}
\item First, two cyclic BSUM algorithms are proposed for the SNMF problem, including an entry-wise BSUM algorithm and a vector-wise BSUM algorithm.  The key to both algorithms is to find proper upper bounds for the objective functions of the subproblems involved in the two algorithms.
\item Then, we study the convergence of permutation-based randomized BSUM (PR-BSUM) algorithm. For the first time, we prove that this algorithm can monotonically converge to the set of stationary solutions. As a result, this paper settles the convergence issue of the exact BCD algorithm\cite{Gill2015} using permuted block selection rule.
\item Lastly, we propose parallel BSUM algorithms to address the SNMF problem. By distributing the computational load to multiple cores of a high-performance processor, the parallel BSUM algorithms can efficiently handle large-scale SNMF problems.
\end{itemize}

The remainder of this paper is organized as follows. We present two cyclic inexact BCD methods, termed as sBSUM and vBSUM, for the SNMF problem in Section II. In Section III, we study the permutation-based randomized BSUM algorithm in a general framework and its application to the SNMF problem. In Section IV, we discuss how to implement the sBSUM algorithm and vBSUM algorithm in parallel in a multi-core/processor architecture. Finally, Section V presents some simulation results, followed by a conclusion drawn in Section VI.

\emph{Notations}: Throughout this paper, we use uppercase bold letters for matrices,
lowercase bold letters for column vectors, and regular letters for scalars. For a matrix $\bX$, $\bX^T$ and $\bX^\dag$ denote the transpose and pseudo-inverse of $\bX$, respectively, and $\bX\geq0$ means entry-wise inequality. $\norm{\bX}$ and $\trace(\bX)$ denotes the Frobenius norm and trace of $\bX$, respectively. We use the notation $X_{ij}$ (or $(\bX)_{ij}$), $\bX_{i:}$ and $\bX_{:j}$ to refer to the $(i,j)$-th entry, the $i$-th row and the $j$-th column of $\bX$, respectively, while $\bX_{a:b,:}$ the submatrix of $\bX$ consisting of elements from the $a$-th row through $b$-th row. $\bI$ denotes the identity matrix whose size will be clear from the context.  We define an element-wise operator $[\bx]_{+}=\max(\bx,0)$. For a function $f(x)$, $f'(x)$ (or $\nabla f(\bx)$) denotes its derivative (or gradient) with respect to the variable $x$.

\section{Cyclic BSUM Algorithms For SNMF}
This paper aims to provide algorithmic design in a broad view for the following generalized formulation of the SNMF problem
\begin{equation}\label{eq:SNMF-g}
\min_{\bX\geq 0} \Vert \bM-\bX\bX^T\Vert^2+\rho R(\bX)
\end{equation}
where $\bX$ and $\bM$ are respectively $n$-by-$r$ and $n$ by $n$ matrices, $\rho$ is a scalar, and $R(\bX)$ is a matrix polynomial of up to fourth-order. Problem \eqref{eq:SNMF-g} is in essence a class of multivariate quartic polynomial optimization problem with nonnegative constraints. Besides the basic SNMF problem, this problem arises also from for example:
\begin{itemize}
\item when $\bM$ is a similarity matrix and $R(\bX)=\sum_{i=1}^r\sum_{j=1}^n |X_{ij}|$ or $\sum_{j=1}^n(\sum_{i=1}^r |X_{ij}|)^2$, problem \eqref{eq:SNMF-g} is a regularized SNMF problem which can introduce some degree of sparsity to $\bX$ or the rows of $\bX$\cite{Kuang2012}.
\item when $\bM$ is the covariance matrix of sensor observations of some distributed sources in a sensor network and $R(\bX)=\sum_{i=1}^r\sum_{j=1}^n |X_{ij}|$,  problem \eqref{eq:SNMF-g} models an informative-sensor identification problem with sparsity-awareness and \emph{nonnegative} source signal signature basis\cite{Schizas2013}.
\end{itemize}

It is not difficult to see that, problem \eqref{eq:SNMF-g} is as hard as the basic SNMF problem when $R(\bX)$ is up to fourth-order,  and the main difficulty lies in the fourth-order matrix polynomial, which makes the problem hard to solve. Hence, we focus on the basic SNMF problem throughout the rest of the paper and develop various BSUM-type algorithms for the basic SNMF problem. Without loss of generality, we assume that the matrix $\bM$ is symmetric\footnote{When $\bM$ is not symmetric, the corresponding problem is equivalent to \eqref{eq:SNMF-g} with $\bM$ thereof replaced by $\frac{\bM+\bM^T}{2}$.}. We stress that all the techniques developed below can be easily generalized to the regularized SNMF problem \eqref{eq:SNMF-g}.

\subsection{Scalar-wise BSUM algorithm}
The basic SNMF problem is given by
\begin{equation}\label{eq:SNMF}
\min_{\bX\geq 0} F(\bX)\triangleq\Vert \bM-\bX\bX^T\Vert^2.
\end{equation}
It is readily seen that, when all the entries of $\bX$ but one are fixed, problem \eqref{eq:SNMF} reduces to minimizing a univariate quartic polynomial (UQP), which can be easily handled. Based on this fact, exact BCD algorithms---cyclicly updating each entry of $\bX$ by solving a UQP problem---has been developed for the SNMF problem\cite{Gill2015,Schizas2013}. However, since the solution to the UQP problem may not be unique, the existing convergence theory of BCD does not apply to the exact BCD algorithm developed for the SNMF problem, though much efforts have been made to resolve the convergence issue in \cite{Gill2015,Schizas2013}. In the following, we develop a BSUM-based algorithm for the SNMF problem, where each time we update one entry of $\bX$ by minimizing a locally tight upper bound of $F(\bX)$, hence termed as scalar-BSUM (sBSUM).

\subsubsection{Algorithm design and complexity analysis}In the sBSUM algorithm, each time we optimize the $(i,j)$-th entry of $\bX$ while fixing the others.
For notational convenience, let $x$ denote the $(i,j)$-th entry of $\bX$ to be optimized. Given the current $\bX$, denoted as $\tX$, we can write the new $\bX$ after updating the $(i,j)$-th entry as
$$\bX=\tX+(x-\Xt_{ij})\bE_{ij},$$
where $\bE_{ij}\in \Rdom^{n\times r}$ is a matrix with $1$ in the $(i,j)$-th entry and $0$ elsewhere. Accordingly, we can express the objective function as follows
\begin{equation}\label{eq:FX}
F(\bX)=g(x)+F(\tX),
\end{equation}
where
\begin{equation}\label{eq:gx}
\begin{split}
g(x)&\triangleq
\frac{a}{4}(x-\Xt_{ij})^4+\frac{b}{3}(x-\Xt_{ij})^3+\frac{c}{2}(x-\Xt_{ij})^2\\
&~~~~~+d(x-\Xt_{ij})
\end{split}
\end{equation}
with the tuple $(a,b,c,d)$ given by
\begin{align}
a&=4,\\
b&=12\Xt_{ij},\\
c&=4\left((\tX\tX^T)_{ii}-M_{ii}+(\tX^T\tX)_{jj}+\Xt_{ij}^2\right),\label{eq:c}\\
d&=4\left((\tX\tX^T-\bM)\tX\right)_{ij}\label{eq:d}.
\end{align}
The derivation of $(a,b,c,d)$ is relegated to Appendix A. To get an upper bound of $g(x)$, let us define
$$\tilde{g}(x)\triangleq g(x)+\frac{1}{2}\tilde{c}(x-\Xt_{ij})^2,$$
where $\tilde{c}\triangleq\max\left(\frac{b^2}{3a}-c,0\right)$. Clearly, $\tilde{g}(x)$ is  a locally tight upper bound of $g(x)$ at $\Xt_{ij}$. Using this upper bound function, the sBSUM algorithm updates $x$ (i.e., the $(i,j)$-th entry of $\bX$) by solving
\begin{equation}\label{eq:ming}
\min\; \tilde{g}(x)\quad \mbox{s.t.} \; x\ge 0.
\end{equation}

The following lemma states that $\tilde{g}(x)$ is convex and moreover problem \eqref{eq:ming} has a unique solution.
\begin{lemma}\label{lem:lem1}
$\tilde{g}(x)$ is a convex function and a locally tight upper bound of $g(x)$. Moreover, problem \eqref{eq:ming} has a unique solution.
\end{lemma}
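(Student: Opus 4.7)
The plan is to reduce everything to a one-variable calculus computation in the shifted variable $y \triangleq x - \tilde X_{ij}$. In this variable,
\[
g(y) = \tfrac{a}{4}y^4 + \tfrac{b}{3}y^3 + \tfrac{c}{2}y^2 + d\,y, \qquad \tilde g(y) = g(y) + \tfrac{1}{2}\tilde c\, y^2,
\]
with $a=4$ and $\tilde c = \max\!\left(\tfrac{b^2}{3a}-c,\,0\right)\ge 0$.

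First I would verify the ``locally tight upper bound'' claim essentially for free: the perturbation $\tfrac12 \tilde c y^2$ vanishes together with its first derivative at $y=0$, so $\tilde g(\tilde X_{ij})=g(\tilde X_{ij})$ and $\tilde g'(\tilde X_{ij})=g'(\tilde X_{ij})$; and since $\tilde c\ge 0$, the perturbation is in fact globally nonnegative, so $\tilde g(x)\ge g(x)$ holds for every $x$, which implies the local upper bound property.

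Next I would establish convexity by differentiating twice:
\[
\tilde g''(y) = 3a\,y^2 + 2b\,y + c + \tilde c.
\]
This is a quadratic in $y$ with positive leading coefficient $3a=12$, so its global minimum is $c+\tilde c - \tfrac{b^2}{3a}$, attained at $y=-b/(3a)$. By the definition of $\tilde c$, we have $\tilde c \ge \tfrac{b^2}{3a}-c$, and therefore $\tilde g''(y)\ge 0$ for every $y\in\mathbb{R}$. Thus $\tilde g$ is convex on $\mathbb{R}$.

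Finally, for the uniqueness of the minimizer of $\tilde g(x)$ on $\{x\ge 0\}$, I would observe that $\tilde g''$ is a nonzero polynomial (its leading coefficient is $12>0$), so it can have at most two real roots; hence $\tilde g''(y)>0$ except on a finite set, which makes $\tilde g$ strictly convex on $\mathbb{R}$. Combined with coerciveness (the quartic leading term $y^4$ forces $\tilde g(x)\to+\infty$ as $x\to+\infty$) and the fact that $\{x\ge 0\}$ is closed and convex, the problem \eqref{eq:ming} has exactly one minimizer. None of these steps looks to be a real obstacle; the only thing to watch for is that ``convex plus nonnegative leading coefficient'' does not automatically yield strict convexity, which is why I would invoke the finite-zero argument above rather than relying on the second derivative being strictly positive everywhere.
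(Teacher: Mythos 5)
Your proposal is correct. The first two parts --- the locally tight upper bound property (the added term $\tfrac12\tilde c\,(x-\Xt_{ij})^2$ is nonnegative and vanishes at $\Xt_{ij}$) and convexity via completing the square in $\tilde g''$ --- are exactly the paper's argument. Where you diverge is the uniqueness step. The paper splits into two cases: when $c>\tfrac{b^2}{3a}$ it invokes strict convexity directly, and in the borderline case $c\le\tfrac{b^2}{3a}$ it computes $\tilde g'(x)=a\bigl(x-\Xt_{ij}+\tfrac{b}{3a}\bigr)^3+d-\tfrac{b^3}{27a^2}$ explicitly and exhibits the unique minimizer in closed form. You instead observe that $\tilde g''$ is a nonconstant quadratic bounded below by $0$, hence vanishes on at most a finite set (in fact, the discriminant computation shows at most one point), so $\tilde g'$ is strictly increasing and $\tilde g$ is strictly convex on all of $\mathbb{R}$; together with coercivity of the quartic on the closed convex set $\{x\ge 0\}$ this gives existence and uniqueness uniformly, with no case split. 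Your route is cleaner and more self-contained as a proof of the lemma as stated; the paper's case analysis earns its keep only because the explicit root formula it produces in the degenerate case is reused immediately afterward (in the closed-form update of Lemma 2.2), so be aware that the constructive content is still needed downstream even though your argument suffices here.
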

\begin{proof}
Clearly, $\tilde{g}(x)$ is a locally tight upper bound of $g(x)$ since $\tilde{g}(\tX_{ij})=g(\tX_{ij})$ and $\tilde{g}(x)\geq g(x)$, $\forall x\geq 0$. Hence, it suffices to show that $g(x)$ is a convex function. By noting that
\begin{align}
\tilde{g}''(x)&=3a(x-\Xt_{ij})^2+2b(x-\Xt_{ij})+c+\tilde{c},\nonumber\\
&=3a\left(x-\Xt_{ij}+\frac{b}{3a}\right)^2+\max\left(0, c-\frac{b^2}{3a}\right)\geq 0,\nonumber
\end{align}
we infer that $\tilde{g}(x)$ is a convex function. Particularly, $\tilde{g}(x)$ is strictly convex when $c>\frac{b^2}{3a}$. Thus, problem \eqref{eq:ming} has a unique solution when $c>\frac{b^2}{3a}$. For the case when $\frac{b^2}{3a}\geq c$, we have
\begin{align}
\tilde{g}'(x)&=a(x-\Xt_{ij})^3+b(x-\Xt_{ij})^2+\frac{b^2}{3a}(x-\Xt_{ij})+d\nonumber\\
&=a\left(x-\Xt_{ij}+\frac{b}{3a}\right)^3+d-\frac{b^3}{27a^2},\nonumber
\end{align}
where we have used the identity $(u+v)^3=u^3+v^3+3u^2v+3v^2u$ in the second equality. Equating the derivative $\tilde{g}'(x)$ to zero and taking the nonnegative constraint into consideration, we obtain the unique solution to
problem \eqref{eq:ming} as $\max\left(\sqrt[\leftroot{-1}\uproot{6}3]{\frac{b^3}{27a^3}-\frac{d}{a}}+\Xt_{ij}-\frac{b}{3a},0\right)$.
This completes the proof.
\end{proof}

As a result of Lemma \ref{lem:lem1}, the unique solution to problem \eqref{eq:ming} can be found by checking the critical point of $\tilde{g}(x)$ (i.e., the point $x$ where $\tilde{g}'(x)=0$). That is, if the critical point of $\tilde{g}(x)$ is nonnegative, then it is the unique optimum solution to problem \eqref{eq:ming}; otherwise the optimal solution is $x=0$. Hence, the key to solving problem \eqref{eq:ming} is to find the root of the cubic equation $\tilde{g}'(x)=0$ whose coefficients are specified by the tuple $(4, b, \max(c,\frac{b^2}{3a}), d)$. Fortunately, we can easily solve the cubic equation $\tilde{g}'(x)=0$ and obtain the unique closed-form solution to problem \eqref{eq:ming} in the following lemma.
\begin{lemma}
Let $(a, b, c, d)$ be given in Eqs. (6-9) and define
$$p\triangleq \frac{3ac-b^2}{3a^2},~ q\triangleq\frac{9abc-27a^2d-2b^3}{27a^3}, ~ \Delta\triangleq\frac{q^2}{4}+\frac{p^3}{27}.$$
 The unique solution to problem \eqref{eq:ming} can be expressed as
 \begin{equation}\label{eq:cubic_sol}
 X_{ij}=[w]_{+}
 \end{equation}
where $w$ is given by
\begin{equation}\label{eq:cubic_sol2}
w=\left\{\begin{split}&\sqrt[\leftroot{-1}\uproot{9}3]{\frac{q}{2}-\sqrt{\Delta}}+\sqrt[\leftroot{-1}\uproot{9}3]{\frac{q}{2}+\sqrt{\Delta}}, ~~~\textrm{if} ~c>\frac{b^2}{3a}\\
&\sqrt[\leftroot{-1}\uproot{6}3]{\frac{b^3}{27a^3}-\frac{d}{a}}, ~~~~~~~~~~~~~~~~~~\textrm{otherwise}\end{split}\right.
\end{equation}
\begin{proof}
As shown in Lemma \ref{lem:lem1} and using the fact $\Xt_{ij}=\frac{b}{3a}$, it is trivial to obtain the unique solution to problem \eqref{eq:ming} when $c\leq\frac{b^2}{3a}$. Thus, it suffices to consider the case when $c>\frac{b^2}{3a}$. The equation $\tilde{g}'(x)=0$ is equivalent to
\begin{align}
\tilde{g}(x)&=a(x-\Xt_{ij})^3+b(x-\Xt_{ij})^2+\frac{b^2}{3a}(x-\Xt_{ij})\nonumber\\
&~~~~~~~~~~~~~~+\left(c-\frac{b^2}{3a}\right)(x-\Xt_{ij})+d\nonumber\\
&=a\left(x-\Xt_{ij}+\frac{b}{3a}\right)^3+\left(c-\frac{b^2}{3a}\right)(x-\Xt_{ij}+\frac{b}{3a})\nonumber\\
&~~~~~~~~~~~~~~-\left(c-\frac{b^2}{3a}\right)\frac{b}{3a}+d-\frac{b^3}{27a^2}\nonumber\\
&=a(x^3+px-q)=0.\nonumber
\end{align}
where we have used the identity $(u+v)^3=u^3+v^3+3u^2v+3v^2u$ in the second equality, and the fact $\Xt_{ij}=\frac{b}{3a}$ and the definitions of $(p,q)$ in the last equality.
Clearly, we have $\Delta=\frac{q^2}{4}+\frac{p^3}{27}>0$ when $c>\frac{b^2}{3a}$. Thus, it follows from Cardano's method \cite{cubic} that the equation $\tilde{g}'(x)=0$ has a unique solution as follows
$$x=\sqrt[\leftroot{-1}\uproot{9}3]{\frac{q}{2}-\sqrt{\Delta}}+\sqrt[\leftroot{-1}\uproot{9}3]{\frac{q}{2}+\sqrt{\Delta}}.$$
Furthermore, acounting for the nonnegative constraint, we can derive the closed-form solution  for the case when $c>\frac{b^2}{3a}$ as shown in \eqref{eq:cubic_sol}. This completes the proof.
\end{proof}
\end{lemma}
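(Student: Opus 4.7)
The plan is to obtain the unique real root of $\tilde g'(x)=0$ in closed form by converting the cubic into a depressed form, invoking Cardano, and then enforcing nonnegativity via $[\,\cdot\,]_+$. The crucial observation exploited throughout is that the particular choices $a=4$ and $b=12\tilde X_{ij}$ from \eqref{eq:c}–\eqref{eq:d} give $b/(3a)=\tilde X_{ij}$, which causes the standard depressed-cubic shift $(x-\tilde X_{ij})\mapsto y-b/(3a)$ to collapse back to $y=x$. This is what drives the remarkably clean formulas for $p$, $q$, and $\Delta$.

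First I would split on the sign of $c-b^2/(3a)$, the same dichotomy used in the definition of $\tilde c$. In the easy case $c\le b^2/(3a)$, by construction $c+\tilde c=b^2/(3a)$, so
\[
\tilde g'(x)=a(x-\tilde X_{ij})^3+b(x-\tilde X_{ij})^2+\tfrac{b^2}{3a}(x-\tilde X_{ij})+d .
\]
The first three terms are precisely the expansion of $a\bigl(x-\tilde X_{ij}+b/(3a)\bigr)^3$ minus $b^3/(27a^2)$, and using $\tilde X_{ij}=b/(3a)$ this telescopes to $ax^3-b^3/(27a^2)$. Equating to zero gives $x=\sqrt[3]{b^3/(27a^3)-d/a}$, which (upon using $a>0$) is already real; invoking Lemma~\ref{lem:lem1} for uniqueness and taking $[\,\cdot\,]_+$ to respect $x\ge 0$ yields the second branch of \eqref{eq:cubic_sol2}.

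For the harder case $c>b^2/(3a)$, $\tilde c=0$, so $\tilde g'(x)=a(x-\tilde X_{ij})^3+b(x-\tilde X_{ij})^2+c(x-\tilde X_{ij})+d$. I would add and subtract $\tfrac{b^2}{3a}(x-\tilde X_{ij})$ to reuse the telescoping identity from the previous paragraph, giving
\[
\tilde g'(x)=a\bigl(x-\tilde X_{ij}+\tfrac{b}{3a}\bigr)^3 + \bigl(c-\tfrac{b^2}{3a}\bigr)(x-\tilde X_{ij}) + d-\tfrac{b^3}{27a^2}.
\]
Again using $\tilde X_{ij}=b/(3a)$ to simplify the cubic piece to $ax^3$ and the linear piece to $(c-b^2/(3a))x-(c-b^2/(3a))b/(3a)$, and collecting constants, the equation $\tilde g'(x)=0$ becomes $a(x^3+px-q)=0$ with $p,q$ as defined. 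Then the assumption $c>b^2/(3a)$ is exactly $p>0$, which forces $\Delta=q^2/4+p^3/27>0$, so Cardano's formula delivers a single real root
\[
x=\sqrt[3]{q/2+\sqrt{\Delta}}+\sqrt[3]{q/2-\sqrt{\Delta}},
\]
matching the first branch of \eqref{eq:cubic_sol2}. Finally, by Lemma~\ref{lem:lem1} the projection $[\,\cdot\,]_+$ accounts for the constraint $x\ge 0$ and yields \eqref{eq:cubic_sol}.

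The main obstacle is purely bookkeeping: verifying the two algebraic identities (the telescoping of the three leading terms into a perfect cube, and the identification of the depressed-cubic coefficients with the stated $p$ and $q$). Once the identity $\tilde X_{ij}=b/(3a)$ is noted, these reduce to a straightforward application of $(u+v)^3=u^3+3u^2v+3uv^2+v^3$, which the authors themselves flag; the $\Delta>0$ guarantee in the $p>0$ regime then makes Cardano's root unique real and dispenses with the need to consider complex branches.
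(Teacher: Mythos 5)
Your proposal is correct and follows essentially the same route as the paper's own proof: the same case split on the sign of $c-\frac{b^2}{3a}$, the same completion of the cube via $(u+v)^3$ exploiting $\Xt_{ij}=\frac{b}{3a}$, the same observation that $c>\frac{b^2}{3a}$ forces $p>0$ and hence $\Delta>0$ so Cardano yields a single real root, and the same final projection onto $x\ge 0$. No substantive differences to report.
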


\begin{table}
\centering
\caption{Algorithm 1: sBSUM algorithm for SNMF}
\begin{tabular}{|p{3.5in}|}
\hline
\begin{itemize}
\item [0.] initialize $\bX$, calculate $\bX^T\bX$ and $(\bX^T\bX)_{ii}$, $i=1,2,\ldots, n$
\item [1.]\; \textbf{repeat}
\item [2.] \; \quad\quad \textbf{for} each $i\in\{1,2\ldots,n\}$ and $j\in\{1,2\ldots,r\}$
\item [3.]\;\quad\quad\quad $b=12\bX_{ij}$
\item [4.]\;\quad\quad\quad  $c=4\left((\bX\bX^T)_{ii}-\bM_{ii}+(\bX^T\bX)_{jj}+\bX_{ij}^2\right)$
\item [5.]\;\quad\quad\quad  $d=4\bX_{i:}(\bX^T\bX)_{:j}-4\bM_{i:}\bX_{:j}$
\item [6.]\; \quad\quad\quad $x =$solve$(4, b, \max(c,\frac{b^2}{3a}), d)$ 
\item [7.]\; \quad\quad\quad  $(\bX\bX^T)_{ii}=(\bX\bX^T)_{ii}+x^2-X_{ij}^2$
\item [8.]\; \quad\quad \quad $(\bX^T\bX)_{j:}=(\bX^T\bX)_{j:}+(x-X_{ij})\bX_{i:}$
\item [9.]\; \quad\quad \quad $(\bX^T\bX)_{:j}=(\bX^T\bX)_{j:}^T$
\item[10.]\; \quad\quad \quad $(\bX^T\bX)_{jj}=(\bX^T\bX)_{jj}+(x-X_{ij})^2$
\item[11.]\; \quad\quad \quad $X_{ij}=x$
\item[12.]\;  \quad\quad \textbf{end}
\item [13.]\; \textbf{until} some termination criterion is met
\end{itemize}
\\
\hline
\end{tabular}
\end{table}

%
To summarize, the basic sBSUM algorithm proceeds as follows. Each time we pick $X_{ij}$ to be optimized---first calculate $(a,b,c,d)$ and then update $X_{ij}$ according to \eqref{eq:cubic_sol}. It is readily seen that computing $c$ in \eqref{eq:c} and $d$ in \eqref{eq:d} are the most computationly costly steps of the sBSUM algorithm. In order to save memory and computational overhead, we can use the following expression
\begin{align}
d&=4\left(\tX(\tX^T\tX)-\bM\tX\right)_{ij}\nonumber\\
&=4\tX_{i:}(\tX^T\tX)_{:j}-4\bM_{i:}\tX_{:j}\label{eq:dupdate}
\end{align}
to compute $d$ as the matrix $\bX^T\bX$ has a smaller size than  the matrix $\bX\bX^T$. Moreover, we have
\begin{align}
&\bX^T\bX=\left(\tX+(x-\Xt_{ij})\bE_{i,j}\right)^T\left(\tX+(x-\Xt_{ij})\bE_{ij}\right)\nonumber\\
&=\tX^T\tX+(x-\Xt_{ij})\tX^T\bE_{ij}+(x-\Xt_{ij})\bE_{ij}^T\tX\nonumber\\
&~~~~~~~~~~~~~~~~~~~~~~~~~+(x-\Xt_{ij})^2\bE_{ij}^T\bE_{ij}\label{eq:XtX}\\
&=\tX^T\tX{+}(x{-}\Xt_{ij})\tX^T\bE_{ij}{+}(x{-}\Xt_{ij})\bE_{ij}^T\tX{+}(x{-}\Xt_{ij})^2\bE_{jj}\nonumber
\end{align}
where $\bE_{jj}\in \Rdom^{r\times r}$ is a matrix with $1$ in the $(j,j)$-th entry and $0$ elsewhere. Note that $\tX^T\bE_{ij}$ is a null matrix with its $j$-th column being $\Xt_{i:}^T$ while $\bE_{ij}^T\tX$ is a null matrix with its $j$-th row being $\Xt_{i:}$. Hence, we only need to update the matrix $(\bX^T\bX)$'s $j$-th row, $j$-th column, and $(j,j)$-th entry once the $(i,j)$-th entry of $\bX$ is updated, which can be done recusively.

Computing $c$ in \eqref{eq:c} requires the knowledge of $(\bX\bX^T)_{ii}$. Similarly, we have
\begin{align}
&\bX\bX^T=\left(\tX+(x-\Xt_{ij})\bE_{ij}\right)\left(\tX+(x-\Xt_{ij})\bE_{ij}\right)^T\nonumber\\
&=\tX\tX^T+(x-\Xt_{ij})\bE_{ij}\tX^T+(x-\Xt_{ij})\tX\bE_{ij}^T\nonumber\\
&~~~~~~~~~~~~~~~~~~+(x-\Xt_{ij})^2\bE_{ij}\bE_{ij}^T\nonumber\\
&=\tX\tX^T{+}(x{-}\Xt_{ij})\bE_{ij}\tX^T{+}(x-\Xt_{ij})\tX\bE_{ij}^T{+}(x-\Xt_{ij})^2\bE_{ii}\nonumber
\end{align}
where $\bE_{ii}\in \Rdom^{n\times n}$ is a matrix with $1$ in the $(i,i)$-th entry and $0$ elsewhere.
It follows that
\begin{equation}\label{eq:update_xtx}
\begin{split}
&(\bX\bX^T)_{kk} \\
&= \left\{\begin{split} &(\tX\tX^T)_{kk}+2(x-\Xt_{ij})\Xt_{ij} +(x-\Xt_{ij})^2~~\textrm{if}~k= i\\
&(\tX\tX^T)_{kk} ~~~~~~~~~~~~~~~~~~~~~~~~~~~~~~~~~~~~~~~\textrm{otherwise}
\end{split}\right.
\end{split}
\end{equation}
Hence, we only need to recursively update $(\bX\bX^T)_{ii}$ once the $(i,j)$-th entry of $\bX$ is updated.

Using \eqref{eq:dupdate} for computing $d$, and \eqref{eq:XtX} and \eqref{eq:update_xtx} for recursive update, we formally present the  proposed sBSUM algorithm in TABLE I, where the function $solve(a_3, a_2,a_1, a_0)$ in Step 6 is to find the unique solution to a cubic equation whose coefficients are specified by the tuple $(a_3, a_2,a_1, a_0)$, followed by a projection onto the nonnegative orthant (i.e., Eq. \eqref{eq:cubic_sol} for Step 6); Steps 7-10 are recursive updates to reduce computational burden. According to the table and the previous analysis, we only need to store the matrix $(\bX^T\bX)$, $\bX$, $\bM$, and the matrix $(\bX\bX^T)$'s diagonal entries in the sBSUM algorithm, which require $O(r^2)$, $O(nr)$, $O(K)$ and $O(n)$ space in memory, respectively. Here, $K$ denotes the number of non-zero entries of $\bM$, which is generally $O(n^2)$ in the dense matrix case while $O(n)$ in the sparse matrix case. Hence, the sBSUM algorithm requires $O(\max(K, nr))$ space in memory. In addition, it is seen that the most costly step of the sBSUM is computing $d$, that is, the computation of $\bM\bX$ dominates the per-iteration computational complexity of the sBSUM algorithm (we refer to updating all entries of $\bX$ as one iteration) . Thus, it can be shown that the per-iteration computational complexity of the sBSUM algorithm is $O(rK)$ in the sparse case while $O(n^2r)$ in the dense case.

\subsubsection{Comparison of exact (cyclic) BCD\cite{Gill2015} and (cyclic) sBSUM}
The exact BCD algorithm and the sBSUM have similar iterations (cf. Algorithm 4 in \cite{Gill2015} and our Algorithm 1) and the same complexity. The main difference between them lies in Step 6:
if we replace Step 6 with $x =solve(4, b, c, d)$ and let the `solve' function denote finding the best nonnegative root (to the corresponding cubic equation) among up to three real roots, the sBSUM  reduces to the exact BCD algorithm. Thus, when $c>\frac{b^2}{3a}$ holds for all iterations, the sBSUM  is exactly the same as the exact BCD algorithm. Clearly, a notable advantage of the sBSUM  is that a closed-form unique real root is guaranteed in Step 6, but this is not the case for the BCD algorithm (Algorithm 1 in \cite{Gill2015} is invoked to find the best root). Thanks to such uniqueness of the subproblems' solutions, the sBSUM is guaranteed to converge to stationary solutions;  see the argument in \cite{Razav2013}. On the other hand, convergence is {\it not} guaranteed for multi-block  cyclic BCD algorithm if each of its subproblem has possibly multiple solutions; see a well-known example due to Powell \cite{PowellBCD73}. The latter result implies that the cyclic BCD algorithm, although  having similar iterations and the same complexity as sBSUM, may not be a good choice for  the SNMF problem. 

It is worth mentioning that, the work \cite{Gill2015} attempted to provide a convergence proof for their cyclic-BCD-based SNMF algorithm. However, there is a gap in their proof as explained as follows. Let $\{\bX_{(t)}\}_{t=0}^{\infty}$ be the sequence generated by the BCD algorithm and $\{\bX_{(t_k)}\}_{k=0}^{\infty}$ be one of its subsequences that converges to $\bar{\bX}$. In the proof of Theorem 1 in \cite{Gill2015}, the authors start by arguing that there exists $(i,j)$ and $p$ such that $\bX+p\bE_{ij}\geq 0$ and
\begin{equation}\label{eq:contadiction}
F(\bar{\bX}+p\bE_{ij})= F(\bar{\bX})-\epsilon<f(\bar{\bX})
\end{equation}
for some $\epsilon>0$ if $\bar{\bX}$ is for contrary assumed to be not a stationary solution, and finally obtain
\begin{equation}\label{eq:sufficient_decrease}
F(\bX_{(t_{k+1})})\leq F(\bX_{(t_{k})})-\epsilon, \forall k>\bar{K}
\end{equation}
where $\bar{K}$ is a sufficiently large integer. Eq. \eqref{eq:sufficient_decrease} means sufficient decrease in the objective function values, resulting in the unboundedness of $F$, i.e., a contradiction, implying that $\bar{\bX}$ is a stationary solution (because $F$ is lower bounded). However, there is a gap in their derivation: in fact the scalar $\epsilon$ in \eqref{eq:contadiction} should be related to $(i,j)$ or the iteration. As a result, the scalar $\epsilon$ in Eq. \eqref{eq:sufficient_decrease} should be an iteration-related one, say $\epsilon_k$. Then, we don't necessary arrive at a contradiction (to the boundedness of $F$) since the term $\sum_{k=\bar{K}+1}^\infty \epsilon_k$ could be finite.

To summarize, since the subproblems of the cyclic-BCD-based SNMF algorithm\cite{Gill2015} may have multiple solutions, there is a lack of theoretical guarantee for the convergence of the cyclic-BCD-based SNMF algorithm in \cite{Gill2015}. However, the proposed sBSUM  is guaranteed to reach the stationary solutions of the SNMF problem \cite{Razav2013}. Moreover, it is interesting to note that the proposed sBSUM  (also including the other BSUM-based SNMF algorithms to be discussed later) cannot get stuck at {\it local maxima}, as shown in Appendix B.
\subsection{Vector-wise BSUM}
Generally speaking, for multi-block BCD/BSUM algorithms, the less the number of block variables is, the faster convergence the BCD/BSUM algorithms achieve. In the following, we develop a new algorithm, termed vector-BSUM (vBSUM), in which each time we update {\it a single row} of $\bX$ by minimizing an upper bound of $F(\bX)$.

We first study the subproblem of the vBSUM  which optimizes the $i$-th row of $\bX$ (denoted as $\bx^T$ for convenience) given the current $\bX$ denoted as $\tX$. To do so, let us define $\overline{\bX}_i \triangleq\tX_{1:i-1,:}$, $\underline{\bX}_i \triangleq\tX_{i:n,:}$, $\overline{\bm{m}}_i \triangleq \bM_{1:i-1,i}$, and $\underline{\bm{m}}_i \triangleq\bM_{i+1:n,i}$. Since we have
\begin{equation}
\left(\begin{array}{c}
\overline{\bX}_i\\
\bx^T\\
\underline{\bX}_i
\end{array}\right)\left(\begin{array}{c}
\overline{\bX}_i\\
\bx^T\\
\underline{\bX}_i
\end{array}\right)^T=\left(\begin{array}{ccc}
\overline{\bX}_i\overline{\bX}_i^T & \overline{\bX}_i\bx & \overline{\bX}_i\underline{\bX}_i^T\\
\bx^T\overline{\bX}_i^T& \bx^T\bx& \bx^T\underline{\bX}_i^T\\
\underline{\bX}_i\overline{\bX}_i^T & \underline{\bX}_i\bx &\underline{\bX}_i\underline{\bX}_i^T
\end{array}\right),
\end{equation}
the subproblem for updating the $i$-th row of $\bX$  is given by
\begin{equation}\label{eq:SNMF-rBSUM}
\begin{split}
\bX_{i:}^T=&\arg\min_{\bx}2\left(\Vert\overline{\bm{m}}_i-\overline{\bX}_i\bx\Vert^2+\Vert\underline{\bm{m}}_i-\underline{\bX}_i\bx\Vert^2\right)\\
&~~~~~~~~~~~~~~~~+(M_{ii}-\norm{\bx}^2)^2\\
&\st~ \bx \geq 0.
\end{split}
\end{equation}
Define $\bP_i\triangleq\underline{\bX}_i^T\underline{\bX}_i+\overline{\bX}_i^T\overline{\bX}_i$, $\bQ_i\triangleq\bP_i-M_{ii}\bI$, and $\bq_i\triangleq(\overline{\bX}_i^T\overline{\bm{m}}_i+\underline{\bX}_i^T\underline{\bm{m}}_i)$. Then we can rewrite problem \eqref{eq:SNMF-rBSUM} equivalently (equivalent in the sense that they have the same optimal solution) as follows
\begin{equation}\label{eq:SNMF-rBSUM2}
\begin{split}
&\bX_{i:}^T=\arg\min_{\bx}\norm{\bx}^4+2\bx^T\bQ_i\bx-4\bq_i^T\bx\\
&\st~ \bx \geq 0.
\end{split}
\end{equation}
Clearly, problem \eqref{eq:SNMF-rBSUM2} is not necessarily convex. But even if it is convex\footnote{When the matrix $\bM$ is a similarity matrix generated by the widely used Gaussian kernel, e.g., $M_{ij}=\exp\left(-\frac{\Vert\bx_i-\bx_j\Vert^2}{\sigma^2}\right)$ (where $\bx_i$ and $\bx_j$ are two data points, and $\sigma^2$ is a parameter), or a \emph{normalized} similarity matrix, we always have $M_{ii}\leq 1$ for all $i$. On the other hand, because $\bX$ is nonnegative, the matrix $\bP_i$ is almost always a positive matrix whose eigenvalues are all in the interval given by $[\min(\bP_i\bone)~~\max(\bP_i\bone)]$\cite[Chap. 8]{Mtx_analysis}. Moreover, when the number of data points $n$ is very large, it is very likely that $\min(\bP_i\bone)>1$ and thus the matrix $\bQ_i=\bP_i-M_{ii}\bI$ is positive semidefinite, implying that problem \eqref{eq:SNMF-rBSUM2} is convex with very high probability in practice.}, it is still unlikely to obtain a closed-form optimum solution to problem \eqref{eq:SNMF-rBSUM2} due to the presence of the nonnegativity constraints and the coupling of the entries of $\bx$ introduced by $\bQ_i$. 
In what follows, we solve \eqref{eq:SNMF-rBSUM2} by using BSUM algorithm. 

Thanks to the Lipschitz continuity of the quadratic part in the objective function,  we can find an upper bound for the objective function of problem \eqref{eq:SNMF-rBSUM2}. Specifically, we have the following bound for the quadratic term $\bx^T\bQ_i\bx$
$$\bx^T\bQ_i\bx\leq \by^T\bQ_i\by+2\by^T\bQ_i(\bx-\by)+S_{\bQ_i}\norm{\bx-\by}^2, \forall \bx, \by.$$
where $S_{\bQ_i} $ is  the  Lipschitz constant of the gradient of $\bx^T\bQ_i\bx$, which can be simply chosen as the maximum eigenvalue of $\bQ_i$ or $\max(\bP_i\bone){-}M_{ii}$ (see footnote 3 for reason). By replacing $\bx^T\bQ_i\bx$ in \eqref{eq:SNMF-rBSUM2} with the above upper bound evaluated at $\tx\triangleq\tX_{i:}^T$ (i.e., let $\by=\tx$ in the above inequality) in problem \eqref{eq:SNMF-rBSUM2} and rearranging the terms, we obtain
\begin{equation}\label{eq:SNMF-rBSUM3}
\begin{split}
&\min_{\bx} \; \norm{\bx}^4+2S_{\bQ_i}\norm{\bx}^2-4\bb_i^T\bx\\
&\st~ \bx \geq 0.
\end{split}
\end{equation}
where $\bb_i\triangleq\bq_i+s_{\bQ_i}\tx-\bQ_i\tx$. Observing that $\bb_i^T\bx$ should be maximized for any fixed $\norm{\bx}$, we show in Lemma \ref{lem:unisol} that problem \eqref{eq:SNMF-rBSUM3} admits a unique closed-form solution. 
\begin{lemma}\label{lem:unisol}
The optimum solution to problem \eqref{eq:SNMF-rBSUM3} can be expressed as follows
\begin{equation}\label{eq:Xisol}
\bX_{i:}^T = \left\{\begin{split}
&\bzero,~~~~~~~~~~~~~\textrm{if}~\bb_i\leq 0\\
&t\frac{[\bb_i]_{+}}{\norm{[\bb_i]_{+}}}~~~~~~\textrm{otherwise}
\end{split}\right.
\end{equation}
where
\begin{align*}
&t=\sqrt[\leftroot{-1}\uproot{9}3]{\frac{\norm{[\bb_i]_{+}}}{2}-\sqrt{\Delta}}+\sqrt[\leftroot{-1}\uproot{9}3]{\frac{\norm{[\bb_i]_{+}}}{2}+\sqrt{\Delta}}\\ \mbox{with}\quad &\Delta\triangleq\frac{\norm{[\bb_i]_{+}}^2}{4}+\frac{S_{\bQ_i}^3}{27}.
\end{align*}
\end{lemma}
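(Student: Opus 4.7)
The plan is to exploit the fact that the objective of problem \eqref{eq:SNMF-rBSUM3} depends on $\bx$ only through $\norm{\bx}$ and the inner product $\bb_i^T\bx$. This will let me decouple the optimization into a radial scalar problem and a directional problem that can be solved in closed form.

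First I would separate the two cases by the sign of $\bb_i$. If $\bb_i\leq 0$, then $\bb_i^T\bx\leq 0$ for every $\bx\geq 0$, so each of the three terms $\norm{\bx}^4$, $2S_{\bQ_i}\norm{\bx}^2$, and $-4\bb_i^T\bx$ is nonnegative (recall that $S_{\bQ_i}\geq 0$ as a Lipschitz constant), and the objective is minimized at $\bx=\bzero$, as claimed. Otherwise, $[\bb_i]_+\neq\bzero$ and I proceed with the directional step: for any fixed radius $t\triangleq\norm{\bx}\geq 0$, the problem of maximizing $\bb_i^T\bx$ subject to $\bx\geq 0$ and $\norm{\bx}=t$ has the closed-form solution $\bx = t\,[\bb_i]_+/\norm{[\bb_i]_+}$ with optimal value $t\norm{[\bb_i]_+}$. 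This is a direct consequence of Cauchy--Schwarz: $\bb_i^T\bx\le [\bb_i]_+^T\bx\le\norm{[\bb_i]_+}\,\norm{\bx}$, where the first inequality uses $\bx\geq 0$ and the second is tight exactly when $\bx$ is a nonnegative multiple of $[\bb_i]_+$.

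Substituting this optimal direction back, problem \eqref{eq:SNMF-rBSUM3} reduces to the univariate problem
\begin{equation*}
\min_{t\geq 0}\;\phi(t)\triangleq t^4+2S_{\bQ_i}t^2-4\norm{[\bb_i]_+}\,t .
\end{equation*}
Since $\phi'(0)=-4\norm{[\bb_i]_+}<0$ and $\phi(t)\to+\infty$ as $t\to\infty$, the minimizer lies in the interior of $[0,\infty)$ and satisfies $\phi'(t)=0$, i.e.\ the depressed cubic
\begin{equation*}
t^3+S_{\bQ_i}\,t-\norm{[\bb_i]_+}=0 .
\end{equation*}
Because the left-hand side is strictly increasing in $t$ (its derivative $3t^2+S_{\bQ_i}$ is nonnegative, and strictly positive whenever $S_{\bQ_i}>0$ or $t\neq 0$), the cubic has a unique real root, which is the sought-after minimizer.

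Finally I would apply Cardano's formula to this depressed cubic with $p=S_{\bQ_i}$ and $q=-\norm{[\bb_i]_+}$. The discriminant is $\Delta=q^2/4+p^3/27=\norm{[\bb_i]_+}^2/4+S_{\bQ_i}^3/27\geq 0$, so Cardano yields the single real root
\begin{equation*}
t=\sqrt[\leftroot{-1}\uproot{9}3]{\tfrac{\norm{[\bb_i]_+}}{2}-\sqrt{\Delta}}+\sqrt[\leftroot{-1}\uproot{9}3]{\tfrac{\norm{[\bb_i]_+}}{2}+\sqrt{\Delta}},
\end{equation*}
matching the statement. Combining this with the directional formula from the second step gives \eqref{eq:Xisol}. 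The only delicate point, and arguably the main obstacle, is the bookkeeping in the directional step: one must verify both that the Cauchy--Schwarz upper bound $\bb_i^T\bx\leq\norm{[\bb_i]_+}\,\norm{\bx}$ is actually attainable on the nonnegative orthant (which it is, by the explicit witness $t[\bb_i]_+/\norm{[\bb_i]_+}$) and that restricting to this one-parameter ray is without loss of optimality; once that is established, the rest is a routine cubic-root computation.
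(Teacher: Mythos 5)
Your proof is correct and follows essentially the same route as the paper's: reduce to $[\bb_i]_+$, fix the radius $t=\norm{\bx}$ and use Cauchy--Schwarz on the nonnegative orthant to identify the optimal direction $[\bb_i]_+/\norm{[\bb_i]_+}$, then solve the resulting depressed cubic $t^3+S_{\bQ_i}t-\norm{[\bb_i]_+}=0$ by Cardano. The only cosmetic difference is that you parameterize by the sphere $\norm{\bx}=t$ while the paper uses the ball $\norm{\bx}\le t$, and you spell out a few details (nonnegativity of all three terms when $\bb_i\le 0$, interiority of the scalar minimizer, monotonicity of the cubic) that the paper leaves implicit; both arguments rest on the same implicit assumption $S_{\bQ_i}\ge 0$.
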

\begin{proof}
It is easily seen that when an entry of $\bb_i$ is non-positive, the corresponding component of the optimal $\bx$ should equal to zero. As a result, problem \eqref{eq:SNMF-rBSUM3} is equivalent to the following
\begin{equation}\label{eq:SNMF-rBSUM4}
\begin{split}
&\min_{\bx}\norm{\bx}^4+2S_{\bQ_i}\norm{\bx}^2-4[\bb_i]_{+}^T\bx, \quad \st \; \; \bx \geq 0.
\end{split}
\end{equation}
Trivially, we have the optimal $\bx=\bzero$ if $\bb_i\leq 0$. Hence, we only need to consider the case when $[\bb_i]_{+}\neq 0$. First, it is readily seen that, problem \eqref{eq:SNMF-rBSUM4} is further equivalent to
\begin{equation}\label{eq:SNMF-rBSUM5}
\min_{\bx,t}\; t^4+2S_{\bQ_i}t^2-4[\bb_i]_{+}^T\bx, \quad \st~ \norm{\bx}\leq t,\; \bx \geq 0.
\end{equation}
Second, note that, (for any fixed $t>0$) $[\bb_i]_{+}^T\bx$ should be maximized subject to $\norm{\bx}\leq t$ and $\bx\geq 0$. By Cauchy-Schwartz inequality,  the optimal $\bx$ takes the form $\bx=t\frac{[\bb_i]_{+}}{\norm{[\bb_i]_{+}}}$. Hence, problem \eqref{eq:SNMF-rBSUM5} reduces to the following convex problem
\begin{equation}\label{eq:SNMF-rBSUM6}
\begin{split}
&\min_{t}\; t^4+2S_{\bQ_i}t^2-4\norm{[\bb_i]_{+}}t, \quad \st\; \; t\ge 0.
\end{split}
\end{equation}
By the first-order optimality condition, we know that the optimal $t$ is the unique real root of the cubic equation $t^3+ S_{\bQ_i}t-\norm{[\bb_i]_{+}}=0$, which can be obtained in closed-form as shown in above. This completes the proof.
\end{proof}
\begin{table}
\centering
\caption{Algorithm 2: vBSUM algorithm for SNMF}
\begin{tabular}{|p{3.5in}|}
\hline
\begin{itemize}
\item [0.] initialize $\bX$ and calculate $\bX^T\bX$
\item [1.]\; \textbf{repeat}
\item [2.] \; \quad\quad \textbf{for} each $i\in\{1,2\ldots,n\}$
\item [3.]\;\quad\quad\quad $\bP_i = (\bX^T\bX)-\bX_{i:}^T\bX_{i:}$
    \item [4.]\; \quad\quad \quad
        $\bq_i=\bX^T\bM_{:i}-M_{ii}\bX_{i:}^T$
\item [5.]\; \quad\quad \quad \textbf{for} $k=1:I_{\max}$~~//repeat Steps 6-7 $I_{\max}$ times
\item [6.]\; \quad\quad \quad \quad $\bb_i=\bq_i+(s_{\bQ_i}+M_{ii})\bX_{i:}^T-\bP_i\bX_{i:}^T$
\item [7.]\; \quad\quad \quad\quad update $\bX_{i:}$ according to \eqref{eq:Xisol} 
    \item[8.]\; \quad\quad \quad \textbf{end}
\item[9.]\; \quad\quad \quad $(\bX^T\bX)=\bP_i+\bX_{i:}^T\bX_{i:}$
\item[10.]\;  \quad\quad \textbf{end}
\item [11.]\; \textbf{until} some termination criterion is met
\end{itemize}
\\
\hline
\end{tabular}
\end{table}

Our proposed vBSUM algorithm, summarized in Table II, requires an approximate solution of problem \eqref{eq:SNMF-rBSUM2} at each iteration. Such solution is obtained by iteratively solving \eqref{eq:SNMF-rBSUM3}  for a fixed number of times $I_{\max}$; see Steps 5-8. 
We comment that regardless of the number of inner iterations performed (i.e., the choice of $I_{\max}$),  vBSUM is guaranteed to converge to stationary solutions of the basic SNMF problem, by applying the same analysis as that of the BSUM algorithm \cite[Theorem 2]{Razav2013}. 
Furthermore, from Table II, one can see that the most costly step in the vBSUM  lies in Step 4 for computing $\bX^T\bM_{:i}$, equivalently $\bX^T\bM$ in each iteration, which requires $O(rn^2)$ operations per-iteration in the dense case while $O(rK)$ operations per-iteration in the sparse case. Hence, the per-iteration computational complexity of the vBSUM  is the same as the sBSUM. In addition, it is seen that we only need to store $(\bX^T\bX)$, $\bX$, $\bM$, and $(\bq_i, \bb_i)$ in the algorithm, which require $O(r^2)$, $O(nr)$, $O(K)$ and $O(r)$ space in memory, respectively. Hence, the vBSUM requires less space in memory in practice than that required by the sBSUM algorithm, though both with the same order of memory complexity, i.e., $O(\max(K, nr))$.

\begin{remark}
Certainly, we can use the element-wise BSUM algorithm (i.e., view each row entry of $\bX$ as one block) to update each row of $\bX$ and obtain an alternative vBSUM algorithm, which is a simple variant of the sBSUM algorithm obtained by (for each $i$) updating $X_{ij}$, $j=1,2,\ldots,r$, multiple times in each iteration. For convenience, we refer to this alternative vBSUM algorithm as v-sBSUM algorithm and also assume $I_{\max}$ repeats as in the vBSUM algorithm for updating rows of $\bX$. Note that, the v-sBSUM  algorithm requires $O(n^2rI_{\max})$ operations in each iteration due to the frequent computation of $\bM\bX$ (cf. Step 5 of Algorithm 1) while $O(nr^2+nr^2I_{\max})$ operations in the vBSUM algorithm, where the second term $O(nr^2I_{\max})$ is due to Step 6 of Algorithm 2. Moreover, the v-sBSUM algorithm requires $r$ times root operations as many as the vBSUM algorithm does. Therefore, we prefer the proposed vBSUM algorithm over the simple variant of the sBSUM algorithm for better efficiency.
%
\end{remark}
\begin{remark}
The vBSUM algorithm is a row-wise BSUM algorithm. Similarly, we can develop a column-wise BSUM algorithm for the basic SNMF problem. However, unlike the subproblem \eqref{eq:SNMF-rBSUM2} in the vBSUM algorithm, the subproblem of updating each column of $\bX$  has no good structure and the corresponding Lipschitz constant is not easily available as the number of data points $n$ is very large. And more importantly, the vBSUM algorithm is more amendable to both randomized and parallel implementation, which will be clear in the following sections.
\end{remark}


\section{Permutation-based Randomized BSUM Algorithm And Its Application in SNMF}
The sBSUM and vBSUM algorithms both fall into the category of deterministic cyclic BSUM algorithms. As a variant of cyclic BSUM algorithm, randomized BCD/BSUM algorithm has been proposed\cite{RazavPHD}, where each time one block variable is chosen to be optimized with certain probability. Differently from the basic randomized BCD/BSUM algorithm, the permutation-based randomized BCD/BSUM algorithm (termed as PR-BCD/BSUM) updates the block variables in a random permutation rule, in which the blocks are randomly selected without replacement. However, the convergence of the PR-BCD/BSUM algorithm has not been well-understood, as pointed out in a recent survey \cite{Wright2015}. In particular, it is not known, at least in theory, whether random permutation provides any added benefit compared with the more classical cyclic block selection rules. In this section, we first study the convergence of the PR-BSUM algorithm (including PR-BCD as a special case) in a general framework and then propose the randomized sBSUM and vBSUM algorithms.
\subsection{The PR-BSUM algorithm and the convergence results}
We start with a general description of the PR-BSUM algorithm. Consider the following multi-block minimization problem
\begin{equation}\label{eq:mb-minP}
\min_{\bx_i\in \cX_i, \forall i} f(\bx_1, \bx_2, \ldots, \bx_m)
\end{equation}
where each $\cX_i\in \Rdom^{n_i}$ is a closed convex set. Define $\cX \triangleq \cX_1\times \cX_2\times\ldots\times \cX_m$ and $\bx=[\bx_1^T~\bx_2^T~\ldots~\bx_m^T]^T$. Let $u_i(\cdot; \bx)$ satisfy the following assumption.
\begin{assump}\label{assump1}
\begin{subequations}
\begin{align}
&u_i(\bx_i; \bx)=f(\bx),\forall \bx\in \cX, \forall i;\label{eq:fun_assump}\\
&u_i(\by_i; \bx)\geq f(\bx_{<i}, \by_i, \bx_{>i}), \forall \by_i\in \cX_i, \forall \bx\in \cX, \forall i;\label{eq:ub_assump}\\
&u'_i(\by_i; \bx, \bd_i)\vert_{\by_i=\bx_i}=f'(\bx; \bd), \forall \bd=(\bzero; \ldots \bd_i; \bzero; \ldots\bzero)\nonumber\\
&~~~~~~~~~~~~~~~~~~~~~~~~~~~~~~~~~~~~\st ~\bx_i+\bd_i\in \cX_i, \forall i;\label{eq:grad_assump}\\
&u_i(\by_i; \bx) ~\textrm{is continuous in}~ (\by_i, \bx), \forall i,
\end{align}
\end{subequations}
\end{assump}
where $\bx_i$ is the $i$-th block component of $\bx$ (similarly for $\by_i$ and $\by$), $\bx_{<i}$ and $\bx_{>i}$ represent the block components of $\bx$ with their indices less than $i$ or larger than $i$, respectively, $u'_i(\by_i; \bx, \bd_i)$ denotes the directional derivative of $u_i(\cdot; \bx)$ with respect to $\by_i$ along the direction $\bd_i$, and $f'(\bx; \bd)$ denotes the directional derivative of $f(\cdot)$ with respect to $\bx$ along the direction $\bd$. The assumption \eqref{eq:grad_assump} guarantees that the first order behavior of $u_i(¡¤,\bx)$ is the same as $f(\cdot)$ locally\cite{Razav2013}, hence it is referred to as the  gradient consistency assumption.

The PR-BSUM algorithm is described in Table III, where the `\emph{randperm}' function in Step 3 generates an index set $\mI$ containing a random permutation of $\{1, \cdots, m\}$ and specifies the order of the update of block variables. The PR selection rule takes advantage of both the randomized rule and the cyclic rule: it guarantees timely update of all block variables, while avoiding being stuck with a bad update sequence. 
In the following, we prove that, with probability one (w.p.1.) the sequence generated by the PR-BSUM algorithm converges to the set of stationary solutions of problem  \eqref{eq:mb-minP}.
\begin{table}
\centering
\caption{Algorithm 3: PR-BSUM Algorithm}
\begin{tabular}{|p{3.5in}|}
\hline
\begin{itemize}
\item [0.] initialize $\bx^0\in \cX$ and set $k=0$
\item [1.]\; \textbf{repeat}
\item [2.] \; \quad\quad $\by=\bx^k$
\item [3.] \; \quad\quad $\mI=\mbox{randperm}(m)$
\item [4.] \; \quad\quad \textbf{for} each $i\in\mI$
\item [5.]\; \quad\quad \quad
        $\cX_i^k=\arg\min_{\bx_i\in\cX_i} u_i(\bx_i, \by)$
\item [6.]\; \quad\quad \quad set $\by_i$ to be an arbitrary element in $\cX_i^k$
\item[7.]\; \quad\quad \textbf{end}
\item[8.]\; \quad\quad $\bx^{k+1} = \by$
\item[9.]\; \quad\quad $k = k+1$
\item [10.]\; \textbf{until} some termination criterion is met
\end{itemize}
\\
\hline
\end{tabular}
\end{table}

\begin{theorem}\label{thm:PR-BSUM}
Let Assumption \ref{assump1} holds. Furthermore, assume that $f(\cdot)$ is regular and  bounded below. Then every limit point of the iterates generated by the PR-BSUM algorithm is a stationary point of problem \eqref{eq:mb-minP} w.p.1.
\end{theorem}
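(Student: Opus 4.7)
The plan is to combine the standard cyclic BSUM convergence argument with a pigeonhole-style observation about the random permutations: each of the $m!$ permutations of $\{1,\dots,m\}$ appears infinitely often in the random sequence $\{\pi_k\}$ w.p.1, so along an appropriate sub-subsequence the update order is effectively deterministic and the analysis of \cite{Razav2013} can be imported almost directly.

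First I would establish monotone descent of $f$ along the iterates. Fix a cycle $k$ and let $\by^{(0)},\by^{(1)},\dots,\by^{(m)}$ denote the values of $\by$ before and after the inner updates of Steps 5--6, so that $\by^{(0)}=\bx^k$ and $\by^{(m)}=\bx^{k+1}$. By \eqref{eq:fun_assump} and \eqref{eq:ub_assump},
\begin{equation*}
f(\by^{(j)}) \;\leq\; u_{\pi_k(j)}(\by^{(j)}_{\pi_k(j)};\by^{(j-1)}) \;\leq\; u_{\pi_k(j)}(\by^{(j-1)}_{\pi_k(j)};\by^{(j-1)}) \;=\; f(\by^{(j-1)}),
\end{equation*}
so $f(\bx^{k+1})\leq f(\bx^k)$. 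Since $f$ is bounded below, $f(\bx^k)\downarrow f^\star$ and the cumulative per-inner-step decrease is finite, so the successive differences vanish. A standard Lipschitz/quadratic-lower-bound argument on $u_i$ (available under Assumption~\ref{assump1} exactly as in \cite{Razav2013}) then gives $\|\by^{(j)}-\by^{(j-1)}\|\to 0$ for every inner step of every cycle.

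Next I would handle the randomness. The permutations $\{\pi_k\}$ are i.i.d.\ uniform on the finite set $S_m$, so for any $\sigma\in S_m$ the events $\{\pi_k=\sigma\}$ are independent with common probability $1/m!$. By the second Borel--Cantelli lemma, the set $K_\sigma=\{k:\pi_k=\sigma\}$ is infinite almost surely; taking a union over the finitely many $\sigma\in S_m$, every permutation appears infinitely often w.p.1. Now fix a sample path in this probability-one event and let $\bar{\bx}$ be any limit point of $\{\bx^k\}$ with associated subsequence $\bx^{k_\ell}\to\bar{\bx}$. For each $\sigma\in S_m$, the indices $\{\ell:\pi_{k_\ell}=\sigma\}$ are infinite, so we may extract a further subsequence along which $\bx^{k_\ell}\to\bar{\bx}$ and the cycle order is the fixed permutation $\sigma$. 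Because the intra-cycle differences tend to zero, all intermediate iterates $\by^{(j)}$ in that cycle also converge to $\bar{\bx}$.

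Along this sub-subsequence the problem reduces to the deterministic cyclic BSUM analysis. Specifically, for each inner step $j$, the update in Step~5 yields $\by^{(j)}_{\sigma(j)}\in\arg\min_{\bx_{\sigma(j)}\in\cX_{\sigma(j)}} u_{\sigma(j)}(\bx_{\sigma(j)};\by^{(j-1)})$, which implies
\begin{equation*}
u'_{\sigma(j)}\bigl(\by^{(j)}_{\sigma(j)};\by^{(j-1)},\bd_{\sigma(j)}\bigr)\;\geq\;0 \quad\text{for every feasible direction } \bd_{\sigma(j)}.
\end{equation*}
Passing to the limit along the chosen sub-subsequence, using continuity of $u_i$ in both arguments and $\by^{(j)},\by^{(j-1)}\to\bar{\bx}$, and invoking the gradient-consistency assumption \eqref{eq:grad_assump}, I obtain $f'(\bar{\bx};\bd)\geq 0$ for every feasible direction $\bd$ supported on block $\sigma(j)$. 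Since $\sigma$ ranges over all permutations and $j$ over all positions, this blockwise inequality holds for every block. Finally, regularity of $f$ at $\bar{\bx}$ lets me combine the blockwise directional-derivative inequalities into $f'(\bar{\bx};\bd)\geq 0$ for every feasible $\bd\in T_{\cX}(\bar{\bx})$, i.e., $\bar{\bx}$ is a stationary point of \eqref{eq:mb-minP} w.p.1.

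The main obstacle is controlling the intermediate iterates within a cycle so that, along the sub-subsequence on which $\bx^{k_\ell}\to\bar{\bx}$ and $\pi_{k_\ell}=\sigma$, all $m$ intra-cycle iterates $\by^{(0)},\dots,\by^{(m)}$ also converge to $\bar{\bx}$; this is what allows the first-order optimality of each inner update to pass to the limit. This step typically requires a mild quadratic-lower-bound (or strong-convexity-like) property on the surrogates $u_i$ beyond Assumption~\ref{assump1}, as used in the BSUM convergence proofs in \cite{Razav2013}; an implicit such property will have to be verified (or added as a hypothesis) for the theorem to go through cleanly. A secondary subtlety is that Step~6 permits $\by_i$ to be any element of the possibly set-valued arg-min $\cX_i^k$, but this is handled in the standard way by choosing a measurable selector and noting the argument uses only the optimality inequality, not uniqueness.
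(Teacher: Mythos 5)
Your proposal has a genuine gap, and it sits exactly where you flagged it. The claim that $\|\by^{(j)}-\by^{(j-1)}\|\to 0$ for the intra-cycle iterates does \emph{not} follow from Assumption 3.1: the telescoping descent argument only yields that the \emph{function-value} differences $f(\by^{(j-1)})-f(\by^{(j)})$ vanish, and converting that into vanishing \emph{iterate} differences requires a quadratic-lower-bound or strong-convexity property of the surrogates $u_i$ (equivalently, uniqueness of the per-block minimizers). No such hypothesis appears in the theorem, and adding one would defeat the purpose of the result: the Remark following the theorem stresses precisely that PR-BSUM does \emph{not} require uniqueness of the per-block solutions (this is what repairs the convergence issue of the cyclic BCD in the SNMF setting, where the per-entry subproblems can have multiple minimizers). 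Indeed, if you assume enough on the $u_i$ to control the intermediate iterates, the deterministic cyclic version already converges by the standard BSUM analysis, so in your argument the randomization does no real work --- a sign that the route misses the mechanism of the theorem.

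The paper's proof avoids the intermediate iterates entirely, and the way it does so is the essential idea you are missing. Conditioning on $\bx^k$ and averaging over the $M=m!$ equally likely permutations, it bounds $f(\bx^{p,k+1})$ using only the decrease achieved by the \emph{first} block $p(1)$ of permutation $p$, namely $f(\bx^{p,k+1})\leq \min_{\bx_{p(1)}\in\cX_{p(1)}} u_{p(1)}(\bx_{p(1)};\bx^k)$ (all subsequent inner updates can only decrease $f$ further). This makes $f(\bx^k)$ a supermartingale whose summed expected decrements are finite, so w.p.1 $f(\bx^k)-\min_{\bx_{p(1)}}u_{p(1)}(\bx_{p(1)};\bx^k)\to 0$ for every permutation $p$. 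Since every block index occurs as $p(1)$ for some permutation, passing to a convergent subsequence and using continuity shows that \emph{each} block of the limit point minimizes its own surrogate at the limit point --- no tracking of intra-cycle iterates, no Borel--Cantelli, and no extra convexity needed. Gradient consistency and regularity then finish as in your last step. Your blockwise-optimality and regularity endgame is fine; it is the bridge from vanishing function decrease to blockwise optimality of the limit that your proposal does not supply.
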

\begin{proof}
There are $M=m!$ different permutations. Let $p$ denote the index of permutation and $p(1)$ denote the first number of the $p$-th permutation. First of all, we have
\begin{align}\label{eq:expect_value}
\expect[f(\bx^{k+1})\mid \bx^k]=\frac{1}{M} \sum_{p=1}^{M} f(\bx^{p,k+1})
\end{align}
where $\bx^{p,k+1}$ denotes the update obtained by running one iteration of PR-BSUM (given $\bx^k$) according to the variable selection rule specified by the $p$-th permutation. Due to the upper bound assumption \eqref{eq:ub_assump} and the update rule, it must hold that
\begin{align}\label{eq:bound_r0}
f(\bx^{p,k+1})\leq \min_{\bx_{p(1)}\in \cX_{p(1)}}u_{p(1)}(\bx_{p(1)}; \bx^k),  \; \forall  p.
\end{align}
Combining \eqref{eq:expect_value} and \eqref{eq:bound_r0}, we have
\begin{align}\label{eq:bound_r}
&\expect[f(\bx^{k+1}) \mid \bx^k]\leq f(\bx^k)-\frac{1}{M} \sum_{p=1}^{M}\left(f(\bx^k)\right.\nonumber\\
&~~~~~~~~~~~~~~\left.-\min_{\bx_{p(1)\in \cX_{p(1)}}}u_{p(1)}(\bx_{p(1)}; \bx^k)\right)
\end{align}
which implies that $f(\bx^k)$ is a supermartingale and thus converges\cite{Bertsekas1996}, and moreover the following holds w.p.1., 
\begin{equation}
\frac{1}{M}\!\sum_{k=1}^\infty \sum_{p=1}^{M}\left(f(\bx^k){-}\min_{\bx_{p(1)}\in \cX_{p(1)}}u_{p(1)}(\bx_{p(1)}; \bx^k)\right)<\infty.
\end{equation}
Thus, by noting $f(\bx^k){\geq} \min_{\bx_{p(1){\in} \cX_{p(1)}}}u_{p(1)}(\bx_{p(1)}; \bx^k), \forall p$, we must have, w.p.1.,
\begin{equation}\label{eq:almost}
\lim_{k\rightarrow\infty} \left(f(\bx^k)-\min_{\bx_{p(1)\in \cX_{p(1)}}}u_{p(1)}(\bx_{p(1)}; \bx^k)\right)=0, \forall p.
\end{equation}
Now let us restrict our analysis to a convergent subsequence $\{\bx^{k_j}\}$ with $\lim_{j\rightarrow\infty}\bx^{k_j}=\bx^{\infty}$. We have from \eqref{eq:almost} and the continuity of $f(\cdot)$ that  \begin{equation}\label{eq:almost1}
\lim_{j\rightarrow\infty} \min_{\bx_{p(1)}\in \cX_{p(1)}}u_{p(1)}(\bx_{p(1)}; \bx^{k_j})=f(\bx^{\infty}), \forall p,~\textrm{\textrm{w.p.1}}.
\end{equation}
On the other hand, according to the update rule, we have
\begin{align}\label{eq:mineq}
&\min_{\bx_{p(1)}\in \cX_{p(1)}}u_{p(1)}(\bx_{p(1)}; \bx^{k_j})\leq u_{p(1)}(\bx_{p(1)}; \bx^{k_j}),  \nonumber\\
&~~~~~~~~~~~~~~~~~~~~~~~~~~~~~\forall \bx_{p(1)}\in \cX_{p(1)},\forall p,~\textrm{\textrm{w.p.1}}.
\end{align}
By taking limit as $j\rightarrow\infty$ on both sides of \eqref{eq:mineq}, and using \eqref{eq:almost1} and the continuity of $u_{i}(\cdot;\cdot)$, we obtain
\begin{equation}\label{eq:mineq1}
f(\bx^{\infty})\leq u_{p(1)}(\bx_{p(1)}; \bx^{\infty}), \forall \bx_{p(1)}\in \cX_{p(1)}, \forall p,~\textrm{\textrm{w.p.1}}.
\end{equation}
Due to the function value consistency assumption \eqref{eq:fun_assump}, we have $f(\bx^{\infty})=u(\bx_{i}^{\infty}; \bx^{\infty}), \forall i$, and thus
\begin{equation}\label{eq:mineq1}
u(\bx_{p(1)}^{\infty}; \bx^{\infty})\leq u_{p(1)}(\bx_{p(1)}; \bx^{\infty}), \forall \bx_{p(1)}\in \cX_{p(1)}, \forall p,~\textrm{\textrm{w.p.1}}.
\end{equation}
Note that the above inequality holds for all permutations. Therefore, we have that w.p.1.,
\begin{equation}\label{eq:mineq1}
u_{i}(\bx_{i}^{\infty}; \bx^{\infty})\leq u_{i}(\bx_{i}; \bx^{\infty}), \forall \bx_{i}\in \cX_{i}, \forall i.
\end{equation}
Checking the first order optimality condition combined with the gradient consistency
assumption \eqref{eq:grad_assump}, we complete the proof.
\end{proof}
\begin{remark}
It is important to note that the PR-BSUM includes the PR-BCD as a special case. Therefore, the fact that the PR-BSUM does not require the uniqueness of the per-block solution of problem  \eqref{eq:mb-minP} implies that the same holds true for the PR-BCD. 
It follows that a simple strategy to ensure convergence of the cyclic BCD algorithm proposed in \cite{Gill2015} is to replace the cyclic rule with the random permutation rule. 
In our subsequent numerical result, we will also show that the PR-BCD/BSUM algorithms are often faster than its determinsitic cyclic counterpart. 
\end{remark}
\subsection{The PR-sBSUM/vBSUM algorithms}
It is easy to implement the random permutation rule in the proposed sBSUM and vBSUM algorithms.  The resulting algorithms are named respectively as PR-sBSUM and PR-vBSUM.
Note that we have $nr$ block variables in the sBSUM  and $n$ block variables in the vBSUM. Therefore, by adding `$\mbox{randperm(nr)}$' as an additional step between Step 1 and Step 2 of Algorithm 1 and meanwhile slightly modifying Step 2, we obtain the PR-sBSUM algorithm. Similarly, by adding `$\mbox{randperm}(n)$' step between Step 1 and Step 2 of Algorithm 2 and meanwhile slightly modifying Step 2, we obtain the PR-vBSUM algorithm. Since the complexity of random permutation of $N$ integers is $O(N)$\cite{randperm}, the PR-sBSUM algorithm and the PR-vBSUM algorithm have the same order of per-iteration complexity as their deterministic versions, though the permutation steps thereof incur additional computational burden.
\section{Parallel BSUM Algorithm For SNMF}
In this section we present parallel versions of sBSUM and vBSUM based upon the recent work \cite{Razav2014}. Such parallel implementation is capable of utilizing multi-core processors and can deal with problems of relatively large size.
To enable parallelization, the main idea is to first use parallel updates to find a good  direction,  followed by some stepsize control for updating the variables \cite{Razav2014}. 
Specifically, the parallel BSUM for solving problem \eqref{eq:mb-minP} works as follows. At the $k$-th iteration, a subset $\mathcal{S}^k$ of block variables are selected and updated in parallel according to the following rules:
\begin{equation}
\begin{split}\label{eq:parallel}
&\hat{\bx}_i^{k-1}= \arg\min_{\bx_i\in \cX_i} u_i(\bx_i, \bx^{k-1}),~ \forall i\in \mathcal{S}^k\\
&\bx_i^k= \bx^{k-1}+\gamma^k(\hat{\bx}_i^{k-1}- \bx^{k-1}),~\forall i\in \mathcal{S}^k,
\end{split}
\end{equation}
where $\gamma^k$ is a stepsize. Theoretically, with appropriate choice of stepsizes, the iterates generated by the parallel BSUM algorithm converge to the set of stationary solutions of problem \eqref{eq:mb-minP}\cite{Razav2014}. 
Practically, because the computation for each block $i\in \mathcal{S}^k$ is independent of the rest, one can implement it on a single core/processor, making the overall algorithm well-suited for parallel implementation on multi-core machines or over a cluster of computing nodes.

Suppose that we have $P$ processors (each with a single core) that can be used in parallel computing. In what follows we show step-by-step how to distribute the storage and computation of variables and/or data to $P$ processors.

\noindent {\bf Preprocessing step.} Observing from Table I and II that, besides the current $\bX$, all  we need to update the $(i,j)$-th entry of $\bX$ is the $i$-th column (or row) of the matrix $\bM$.
Based on this observation, we assign the variables and data to processors as follows: we divide the rows of $\bX$ into $P$ subsets $\bX_i$, $i=1,2,\ldots,P$.   Accordingly, we also divide the rows of $\bM$ into $P$ subsets $\bM_i$, $i=1,2,\ldots,P$. Moreover, each processor stores a copy of $\bX$  and $\bM_i$ in its local memory which will be used to update the $i$-th row's entries of $\bX$. See Fig. 1 for illustration.

\noindent{\bf Computation step.} At each iteration of the algorithm each processor $i$ randomly selects a subset of local variables from $\bX_i$ to carry out the computation \eqref{eq:parallel}. In particular, a subset of {\it entries} of $\bX_i$ will be selected for the parallel sBSUM while  a subset of {\it rows}  of $\bX_i$ will be picked for parallel vBSUM.

\noindent{\bf Communication step.}  After the updates are done, the processors exchange the updated variables among themselves to form a new $\bX$ at each local memory.  
To avoid communicating the entire $\bX$ among the processors,  we propose to exchange {\it both} the newly updated entries and their associated indices $(i,j)$ for the sBSUM, while only exchange the newly updated entries and the {\it row indices} for the vBSUM. It can be shown that, if we update $Jr$ entries of $\bX$ in total at each iteration, the per-iteration communication complexity of the parallel sBSUM and vBSUM  are $O(3Jr(P-1))$ and $O(J(r+1)(P-1))$, respectively. Hence, the parallel vBSUM algorithm incurs less communication overhead than the parallel sBSUM algorithm. 

\begin{figure}[t]
\centering
\includegraphics[width=3.5in]{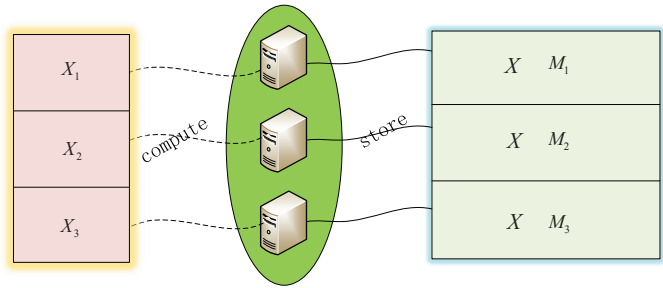}
\caption{This example shows how we assign variables/data to processors. In this example, we have three single-core processors. We divide the rows of $\bX$ into three blocks $(\bX_1,\bX_2,\bX_3)$ from top to down and assign them to the three processors for computing (the correspondence between the variables/data and processors are indicated by dotted/real lines). The size of three blocks depends on the processing capability of the corresponding processors. Furthermore, the rows of $\bM$ are accordingly divided into three blocks $(\bM_1,\bM_2,\bM_3)$ with the same size as the three blocks of $\bX$. All three processors store $\bX$ but each with only a portion of $\bM$. Hence, the processors need to exchange the updated variables to each other to locally maintain a copy of the current $\bX$.}
\label{fig:fig0}
\end{figure}

\section{Simulation Results}
This section presents numerical examples to show the effectiveness of the proposed algorithms. We first describe the simulation setup, and then demonstrate the convergence performance of various proposed algorithms. Finally, we compare the proposed algorithms with several recent state-of-the-art algorithms \cite{Kuang2012,Huang2014}.

\subsection{The Setup}
In our simulations, all algorithms are implemented in Matlab on a laptop of $8$ GB Memory and $2.10$ GHz CPU, except for the parallel BSUM algorithms. To construct the data matrix $\bM$ we utilize two different approaches suggested in  \cite{Huang2014} and \cite{Kuang2012}, respectively:

\noindent{\bf Approach 1}--{The correlation kernel (CK) method\cite{Huang2014}}: the data matrix $\bX_{data}\in \Rdom^{n\times  m}$ could be real data  or randomly generated. In the latter case, it has a fraction $s$ of zeros, and the nonzero entries all follow an i.i.d exponential distribution with unit mean. Given $\bX_{data}$, we set $\bM=\bX_{data}\bX_{data}^T+\frac{\sigma}{2}(\bN+\bN^T)$, where $\bN\in \Rdom^{n\times n}$ represents some noise matrix whose entries are randomly drawn from an i.i.d Gaussian distribution with zero mean and standard deviation $\sigma=0.1$.

\noindent{\bf Approach 2}--{The sparse Gaussian kernel (SGK) method\cite{Kuang2012}}: $\bX_{data}$ could be real data or randomly generated. In the latter case, each entry follows an i.i.d exponential distribution with unit mean. Given $\bX_{data}$, we construct $\bM$ following three steps, including 1) computing Gaussian kernel with self-tuning method, 2) sparsification, and 3) normalization. We refer readers to Sec. 7.1 of \cite{Kuang2012} for the details.

The reason to consider two different methods for generating $\bM$ is that, in our experiments, we have observed that the performance of the state-of-the-art algorithms \cite{Kuang2012,Huang2014} is significantly impacted by the way that such $\bM$ is generated. Therefore, to thoroughly investigate the performance of various algorithms, we use both methods to generate $\bM$ in our experiments.  

In our simulation, unless otherwise specified, the data matrix $\bX_{data}$ is randomly generated with $m=r$. If there is a value ``$s$" shown in the caption of a figure, it indicates that the CK method is used. Although the CK method generates a sparse matrix $\bX_{data}$ when $s$ is large, the corresponding $\bM$ may not be sparse due to the inner product operation and the existence of noise. In contrast, the SGK method produces sparse matrix $\bM$ due to the sparsification step. As will be seen later, the sparsity would impact the convergence rate of all the algorithms.

All algorithms are randomly initialized from points in the form of $\sqrt{\alpha}\bX_0$, where $\bX_0$ is a randomly generated nonnegative matrix and $\alpha= \arg\min_{\alpha\geq 0}\Vert\bM-\alpha\bX_0\bX_0^T\Vert^2$ is chosen to make the initial point match the scale of $\bM$. Two criteria are used to measure the performance of the algorithms. The first one is related to the objective value, given by $100 \times \Vert\bM-\bX\bX^T\Vert/\Vert\bM\Vert$\cite{Gill2015}, while the other is used to measure the gap to stationarity, given by $\Vert\bX-[\bX-\nabla F(\bX)]_{+}\Vert_{\infty}$ \cite{Razav2014}. It can be readily shown that such gap equals to zero if and only if a  stationary solution is achieved. For convenience, we refer to such gap as the \emph{optimality gap}. 
\subsection{The convergence performance of the BSUM algorithms}
We first examine the convergence performance of the proposed algorithms.
\subsubsection{Cyclic BCD/BSUM Vs. PR-BCD/BSUM}
In this set of simulations, taking the PR-sBSUM algorithm as an example, we compare the convergence performance of the cyclic BCD/BSUM algorithm and the permutation-based randomized BCD/BSUM algorithm. The simulation results are presented in Figs. 2 and 3, where each data point is obtained by averaging $20$ problem instances. First, we can see from the two plots that the BSUM algorithms can achieve very similar (but not exactly the same) convergence performance as that of the BCD algorithms. 
Second, it is observed from Fig. 2 that, when the problem is generated by the CK method with small size, the cyclic BCD/BSUM  and the PR-BCD/BSUM  could have very similar  performance. The benefit of the permutation-based random selection rule becomes significant when the problem size increases. 
In contrast,  for the problems generated by the SGK method, both algorithms have similar performance even in high-dimensions; see Fig. 3. Our conclusion is that the PR-BCD/BSUM at least perform as well as its cyclic counterpart, and in certain high-dimensional scenarios using such scheme is indeed beneficial. 

\begin{figure*}[htbp]
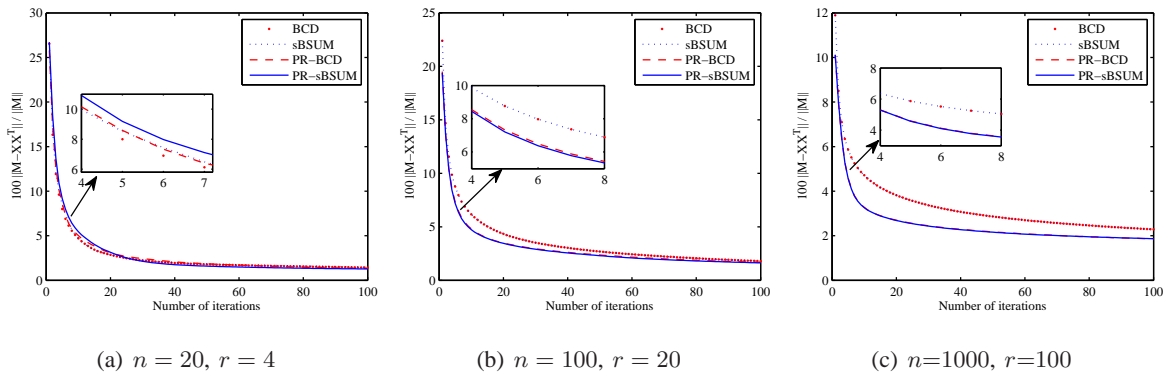

\centering \subfigure[$n=20$, $r=4$]{
\label{fig:subfig:fig2a} 
\includegraphics[width=0.3\textwidth]{fig2a.eps}}
\subfigure[$n=100$, $r=20$]{
\label{fig:subfig:fig2b} 
\includegraphics[width=0.3\textwidth]{fig2b.eps}}
\subfigure[$n{=}1000$, $r{=}100$]{
\label{fig:subfig:fig2c} 
\includegraphics[width=0.3\textwidth]{fig2c.eps}}
\label{fig:fig2}
\caption{As the problem size increases, the performance gain of the PR-BCD/BSUM  over the cyclic BCD/BSUM  becomes more significant: $s=0$.}
\end{figure*}


\begin{figure}[t]
\centering
\includegraphics[width=3.5in]{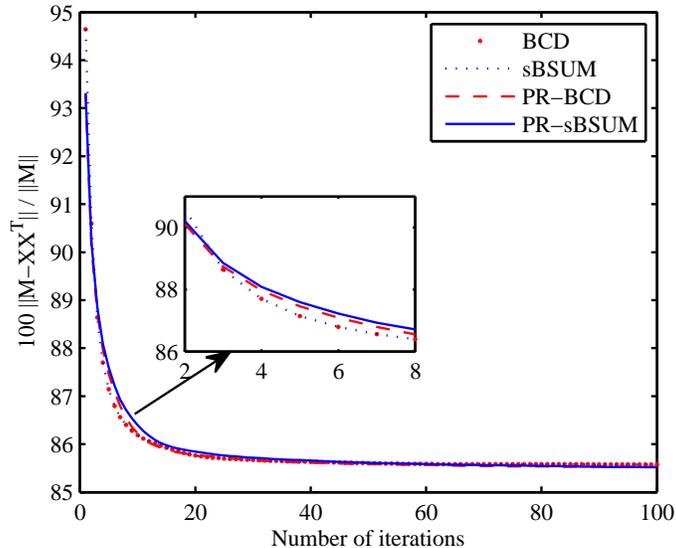}
\caption{The PR-BCD/BSUM  is not necessarily better than the cyclic BCD/BSUM  for the problems generated by the SGK method, $n=1000$, $r=100$.}
\label{fig:fig3}
\end{figure}

\subsubsection{vBSUM Vs. sBSUM/BCD}
In this set of simulations, we compare the convergence performance of the (cyclic) sBSUM/BCD and vBSUM in terms of cpu time. For vBSUM, we set $I_{\max}=10$. As shown in Fig. 1, the sBSUM and BCD  have very similar iteration convergence behavior. However, they consume different cpu time in each iteration because of different solutions applied to the cubic equations. As mentioned in Sec. II-A.2), for solving each cubic equation, a subroutine (i.e., Algorithm 1 in \cite{Gill2015}) is required by the BCD but a closed-form solution is available in the sBSUM. Hence, the sBSUM is more efficient than the BCD in terms of the consumed cpu time, as shown in Fig. 4. Moreover, it can be observed from Fig. 4 that the vBSUM outperforms the sBSUM. 
\begin{figure}[htbp]
\centering \subfigure[Objective function]{
\label{fig:subfig:fig4a} 
\includegraphics[width=0.5\textwidth]{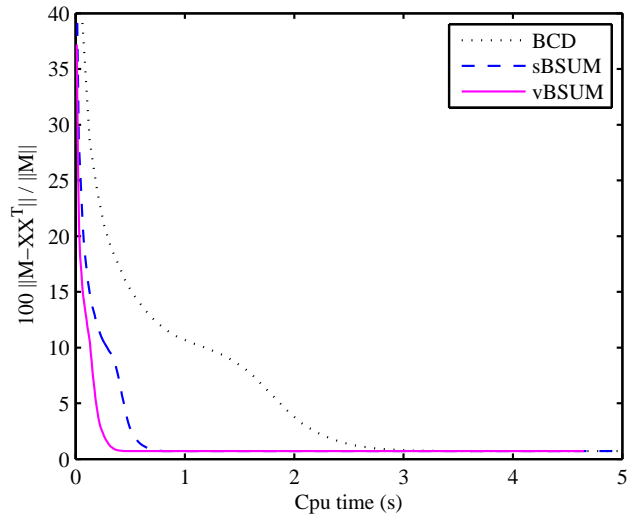}}
\hspace{-5pt} \subfigure[Optimality gap]{
\label{fig:subfig:fig4b} 
\includegraphics[width=0.5\textwidth]{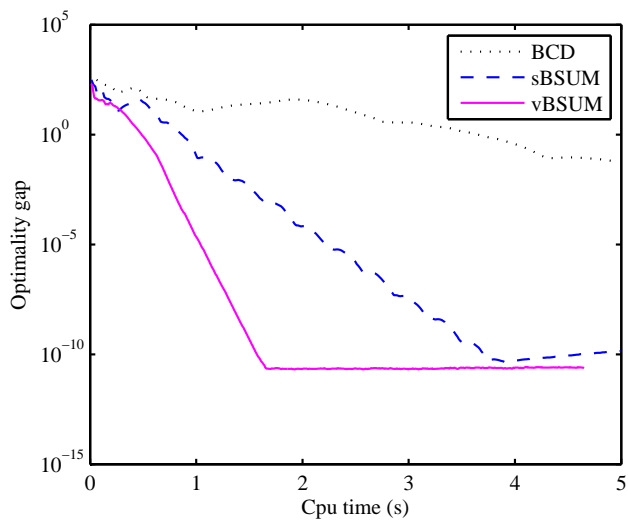}}
\label{fig:fig4}
\caption{The vBSUM  outperforms the sBSUM  while both are faster than the BCD: $n=100$, $r=10$, $s=0.5$.}
\end{figure}

\subsubsection{Parallel BSUM Vs. Serial BSUM}
In this set of simulations, we compare the performance of the parallel BSUM algorithms and the serial BSUM algorithms (i.e., the sBSUM algorithm and vBSUM algorithm). A constant stepsize $\gamma=1$ is used for all parallel BSUM algorithms. We use the noiseless CK method and the 20newsgroup text data\footnote{This dataset contains binary occurrences for $100$ words across $16242$ postings, which is available from http://www.cs.nyu.edu/$\backsim$roweis/data/20news\_w100.mat.} to generate the similarity matrix $\bM$ with $n=16242$ and $r=10$. Both the parallel BSUM  and serial BSUM  are implemented on a Condo Cluster consisting primarily of $176$ SuperMicro servers each with two $8$-core Intel Haswell processors, 128 GB of memory and 2.5 TB of available local disk. 

The simulation results are presented in Fig. 5, where Fig. 5(a) shows convergence performance of the parallel BSUM  while Fig. 5(b) shows the cpu time consumed for running $20$ iterations of each algorithm\footnote{For a clear demonstration of the cpu time, we run the parallel algorithms $20$ iterations, which are generally enough for the algorithms to achieve a good solution.} decreasing with the number of cores used in the parallel computation. It is observed that parallelization does significantly reduces the overall computational time.  
For example, when 256 cores are used, the vBSUM algorithm takes only less than $0.5$ second to complete $20$ iterations of computation. Furthermore, we can see that the parallel vBSUM  is faster than the parallel sBSUM. This is partly because that the vBSUM requires less communication overhead than the sBSUM, as discussed in the end of Sec. IV. In addition, it is worth mentioning that, the cpu time does not scale linearly with respect to the number of cores -- a  result predicted by the Amdahl's law \cite{Amdahl}.


\begin{figure}[htbp]
\centering \subfigure[Objective function Vs. cpu time]{
\label{fig:subfig:fig5a} 
\includegraphics[width=0.5\textwidth]{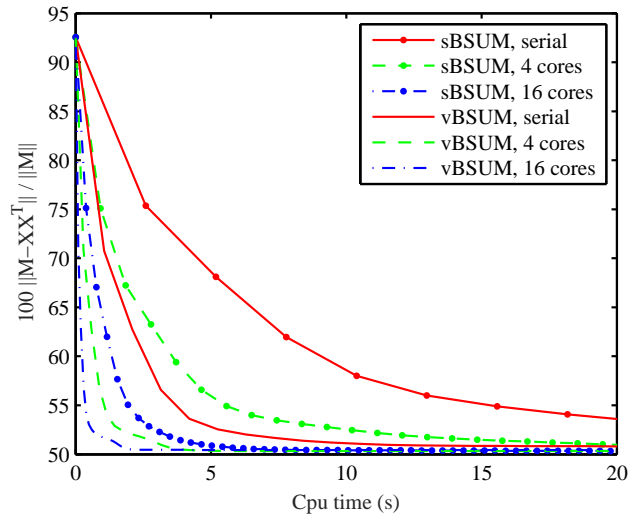}}
\vspace{15pt}
\subfigure[Cpu time Vs. number of cores]{
\label{fig:subfig:fig5b} 
\includegraphics[width=0.5\textwidth, height=0.4\textwidth]{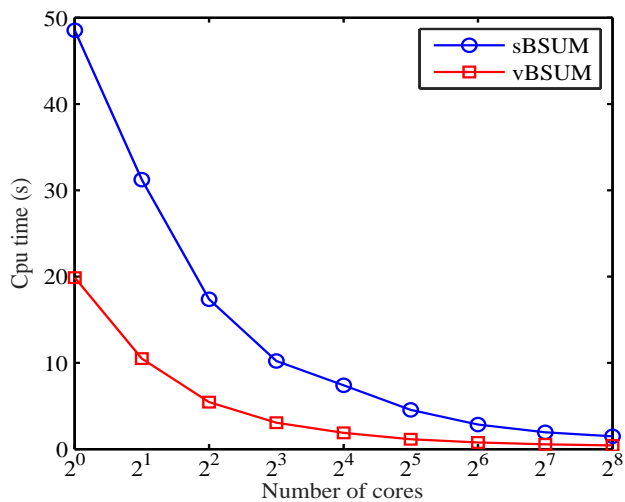}}
\label{fig:fig5}
\caption{Parallel BSUM algorithms are much faster than serial BSUM algorithms and their cpu time required for $20$ iterations decreases with the number of cores involved in the parallel computation.}
\end{figure}

\subsection{Comparison with state-of-the-art algorithms}
We compare the proposed algorithms with several state-of-the-art SNMF algorithms, listed as follows:
\begin{itemize}
\item \emph{The Newton Method}\cite{Kuang2012}: a Newton-like method with per-iteration complexity of $O(n^3r)$. It can achieve convergence to stationary solutions of the SNMF problem.
\item \emph{The ANLS Method}\cite{Kuang2012}: a penalty method that addresses the following penalty problem
 $$\min_{\bX,\bY\geq 0} \Vert\bM-\bX\bY^T\Vert^2+\alpha\Vert\bX-\bY\Vert^2,$$  which is based on a key fact that $\bX{-}\bY{\rightarrow} 0$ as the penalty parameter $\alpha$ goes to infinity. For a tuned penalty parameter $\alpha$, a two-block coordinate descent algorithm is used to solve the penalty problem, leading to $\bX$-subproblem and $\bY$-subproblem (corresponding to Eqs. (16) and (17) in \cite{Kuang2012}) in each iteration. The X(Y)-subproblem can be further decomposed into $n$ independent nonnegativity-constrained least-square (NLS) problems in the form of $\min_{\bx_i\geq 0} \Vert\bA\bx_i-\bb_i\Vert^2$ with $\bA\in \Rdom^{(n+r)\times r}$ and $\bx_i\in \Rdom^{r\times 1}$, $i=1,2,\ldots, n$. It is not difficult to see that each NLS problem have computational complexity at least in the same order as that of solving its unconstrained counterpart (i.e., computing $\bx_i = \bA^{\dag}\bb_i$), which requires at least $O((n+r)r)$ operations. Hence, the per-iteration complexity of the ANLS method is at least $O(n^2 r)$, the same order as that of the vBSUM/sBSUM algorithms.
\item \emph{The rEVD Method}\cite{Huang2014}: an approximate method that is based on \emph{reduced} eigenvalue decomposition (EVD) and unitary rotation. This method first performs rank-$r$ reduced EVD on $\bM$, yielding $\bM \approx \bU_r\bSigma_r\bU_r^T$ where $\bSigma_r$ is a $r$ by $r$ diagonal matrix of the $r$ largest eigenvalues and $\bU_r$ is an $n$ by $r$ matrix whose columns are the corresponding eigenvectors. And then an additional step is performed to find an approximate nonnegative matrix factorization $\bX$ of $\bM$ by solving
    \begin{equation}\label{eq:rotation}
    \begin{split}
    &\min_{\bX, \bQ} \Vert\bX-\bB\bQ\Vert^2\\
    &\st~ \bX\geq 0\\
    &~~~~~~\bQ^T\bQ=\bQ\bQ^T=\bI
    \end{split}
    \end{equation}
    where $\bB\triangleq\bU_r\bSigma_r^{\frac{1}{2}}$ is given and $\bQ$ is a $r$ by $r$ \emph{unitary} matrix. Problem \eqref{eq:rotation} is addressed using two-block coordinate descent algorithm with per-iteration complexity $O(nr^2)$\cite{Huang2014}. Although with lower per-iteration complexity, the rEVD method requires $O(n^3)$ operations due to EVD. Moreover, it cannot guarantee convergence to stationary solutions of the SNMF problem when the matrix $\bM$ does not have an exact SNMF. 
\end{itemize}

The codes of Newton and ANLS methods are available from http://math.ucla.edu/~dakuang/ (hence we use the default tuning strategy for $\alpha$ recommended by the authors of \cite{Kuang2012}), while the rEVD method is implemented according to the pseudo-code shown in Fig 4 of \cite{Huang2014}. The simulation results are presented in Figs. 6-9, from which, the following observations are made:
\begin{itemize}
\item Due to the linear per-iteration complexity with respect to $n$, the rEVD algorithm always converges fastest in all cases and often generate a good approximation solutions with reasonably small objective values. However, the rEVD algorithm cannot reach stationary solutions in all cases. Moreover, it cannot generate any solutions until the EVD operation is completed. For example, as shown in Fig. 9, the rEVD algorithm starts producing solutions in about 20 seconds, while in the mean time, the vBSUM/sBSUM algorithm has output an approximate solution with smaller objective value than the final solutions of the rEVD algorithm. This fact stands the way of adoption of the rEVD algorithm for large-scale problems. But it is worth mentioning that, the rEVD algorithm can serve as a good initialization for the BSUM/BCD algorithms when the problem size is relatively small.
\item The ANLS algorithm could sometimes perform better than the BSUM algorithms. The main reason is that the ANLS algorithm is in essence two-block BCD, which allows it to benefit from MATLAB's high-performance matrix computation. However, the ANLS algorithm may converge to a bad solution (see Fig. 6). This is because that the adaptive penalty parameter provided by the authors  of \cite{Kuang2012} may fail to work effectively.
\item The Newton method often works very well for small-scale problems, though its optimality gap value could reach a relatively high error floor. However, it could converge extremely slowly for large-scale problems due to its high per-iteration complexity (see Fig. 8). This fact stands the way of its adoption for large-scale problems.
\item In all cases, the proposed BSUM algorithms (especially the vBSUM algorithm) can work very well and quickly reach a good approximate solution in the first few seconds or iterations. Moreover, we find that, the BSUM algorithms can converge faster for the problems with sparse similarity matrices as compared to those with dense similarity matrix. For example, in Fig. 8 (corresponding to a dense similarity matrix $\bM$ generated by the CK method), the vBSUM algorithm takes about $600$ seconds to converge while in Fig. 9 (corresponding to a sparse similarity matrix generated by the SGK method) it takes only about $60$ seconds for the problem of same size.
\end{itemize}

To summarize, the proposed BSUM-type algorithms can reach stationary solutions and are more numerically stable than the state-of-the-art algorithms regardless the way the matrix $\bM$ is generated. Moreover, parallel computation can significantly improve the efficiency of the proposed methods.
\begin{figure}[htbp]
\centering \subfigure[Objective function]{
\label{fig:subfig:fig4a} 
\includegraphics[width=0.5\textwidth]{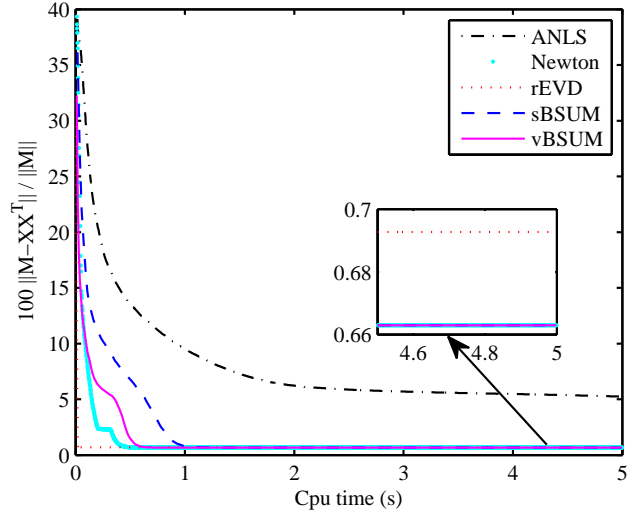}}
\hspace{-5pt} \subfigure[Optimality gap]{
\label{fig:subfig:fig4b} 
\includegraphics[width=0.5\textwidth]{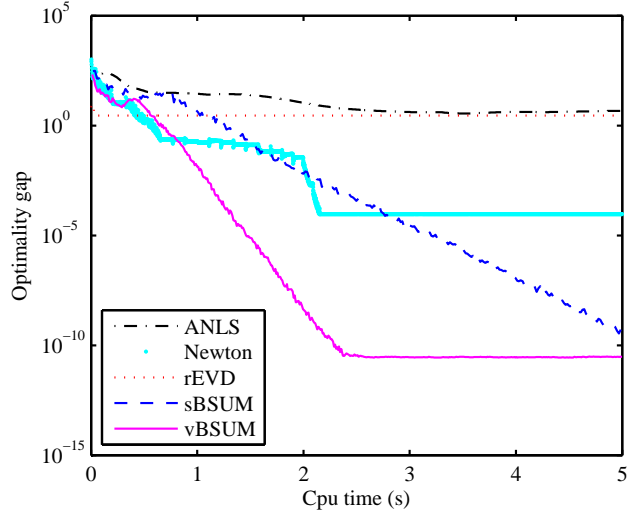}}
\label{fig:fig6}
\caption{The ANLS and rEVD algorithms may fail to converge to stationary solutions: $n=100$, $r=10$, $s=0.5$.}
\end{figure}

\begin{figure}[htbp]
\centering \subfigure[Objective function]{
\label{fig:subfig:fig4a} 
\includegraphics[width=0.5\textwidth]{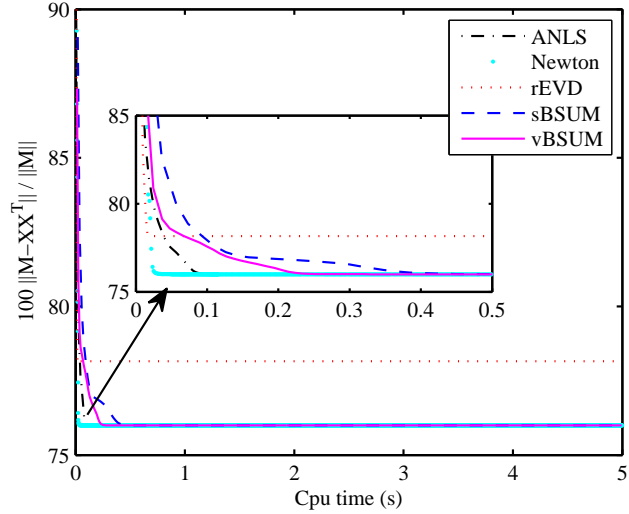}}
\hspace{-5pt} \subfigure[Optimality gap]{
\label{fig:subfig:fig4b} 
\includegraphics[width=0.5\textwidth]{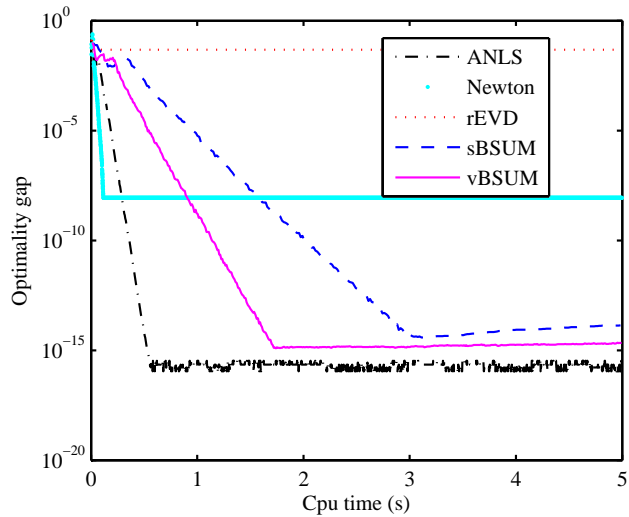}}
\label{fig:fig6}
\caption{The rEVD algorithm has fastest convergence but may not achieve a good approximation solution: $n=100$, $r=10$, the SGK method is used.}
\end{figure}

\begin{figure}[htbp]
\centering \subfigure[Objective function]{
\label{fig:subfig:fig4a} 
\includegraphics[width=0.5\textwidth]{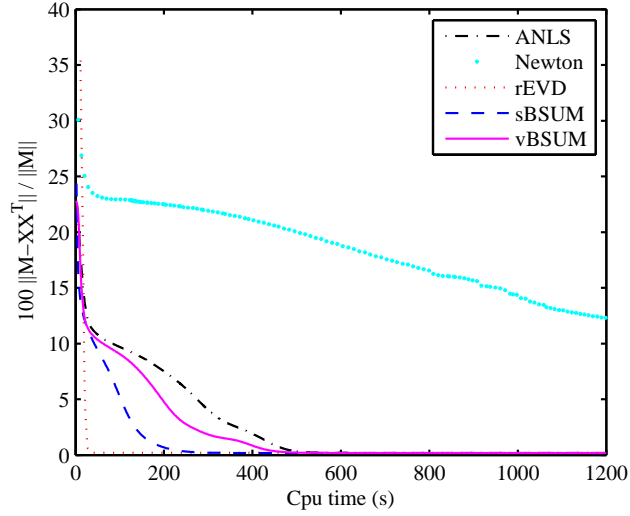}}
\hspace{-5pt} \subfigure[Optimality gap]{
\label{fig:subfig:fig4b} 
\includegraphics[width=0.5\textwidth]{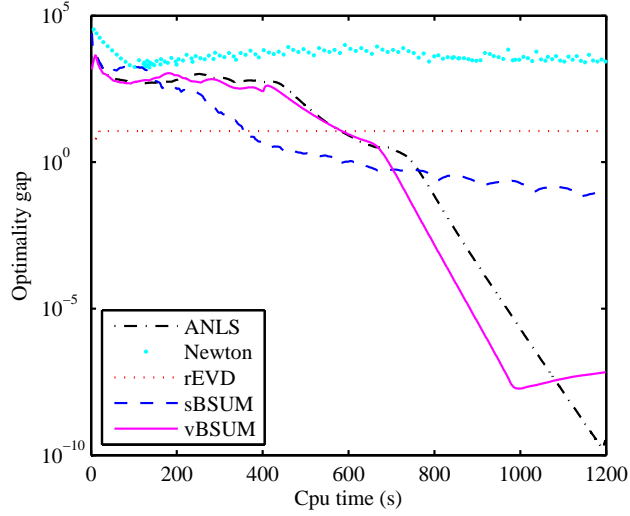}}
\label{fig:fig6}
\caption{The Newton algorithm converges very slowly for problems of large size: $n=2000$, $r=50$, $s=0.5$.}
\end{figure}

\begin{figure}[htbp]
\centering \subfigure[Objective function]{
\label{fig:subfig:fig4a} 
\includegraphics[width=0.5\textwidth]{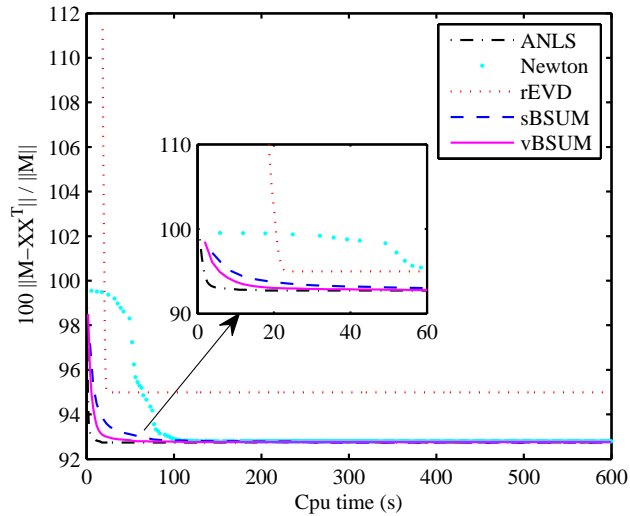}}
\hspace{-5pt} \subfigure[Optimality gap]{
\label{fig:subfig:fig4b} 
\includegraphics[width=0.5\textwidth]{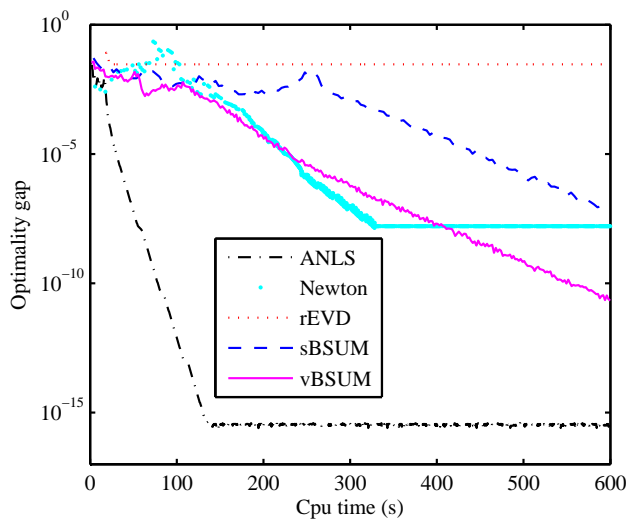}}
\label{fig:fig6}
\caption{The EVD operation in the rEVD algorithm may take a long time for problems of large size: $n=2000$, $r=50$, the SGK method is used.}
\end{figure}

\section{Conclusions}
In this paper, we have proposed both serial and parallel BSUM algorithms for the SNMF problem. All the algorithms are guaranteed to have convergence to the stationary solutions of the SNMF problem. The serial BSUM algorithms can perform competitively with state-of-the-art methods while the parallel BSUM algorithms can efficiently handle large-scale SNMF problems. We remark that our serial BSUM algorithms initialized from the final solution of the efficient rEVD algorithm \cite{Huang2014} could be expected as a good choice for solving small-scale SNMF problems. For large-scale SNMF problems, we suggest using either the randomized BSUM algorithms or parallel algorithms if a multi-core processor is available.

\appendices
\section{Derivation of the tuple $(a, b, c, d)$ in Eqs. (6-9)}
Let $x_{ij}{=}x-\tilde{X}_{ij}$. Recall that $\bE_{ij}{\in} \Rdom^{n\times r}$/$\bE_{jj}{\in} \Rdom^{r\times r}$/$\bE_{ii}{\in}$ $\Rdom^{n\times n}$ is a matrix with $1$ in the $(i,j)$/$(j,j)$/$(i,i)$-th entry and $0$ elsewhere. Moreover, it holds true that $\bE_{ij}\bE_{ij}^T=\bE_{ii}$ and $\bE_{ji}\bE_{ji}^T=\bE_{jj}$. Then we have
\begin{align*}
&F(\bX)\\
&=\left\Vert\left(\tX+x_{ij}\bE_{ij}\right)\left(\tX+x_{ij}\bE_{ij}\right)^T-\bM\right\Vert^2\\
&=\left\Vert\tX\tX^T-\bM+x_{ij}\left(\tX\bE_{ij}^T+\bE_{ij}\tX^T\right)+x_{ij}^2\bE_{ii}\right\Vert^2
\end{align*}
\begin{align*}
&=F(\tX){+}2x_{ij}\trace\left((\tX\tX^T{-}\bM)\left(\tX\bE_{ij}^T{+}\bE_{ij}\tX^T{+}x_{ij}\bE_{ii}\right)\right)\\
&~~~~+x_{ij}^2\Vert\tX\bE_{ij}^T+\bE_{ij}\tX^T+x_{ij}\bE_{ii}\Vert^2\\
& =x_{ij}^4+2\trace\left(\left(\tX\bE_{ij}^T+\bE_{ij}\tX^T\right)\bE_{ii}\right)x_{ij}^3\\
&~~~+\left(2\trace\left((\tX\tX^T-\bM)\bE_{ii}\right)+\left\Vert\tX\bE_{ij}^T+\bE_{ij}\tX^T\right\Vert^2\right)x_{ij}^2\\
&~~~+2\trace\left((\tX\tX^T-\bM)\left(\tX\bE_{ij}^T+\bE_{ij}\tX^T\right)\right)x_{ij}+f(\tX)\\
\end{align*}
By comparing the above equality with Eqs. \eqref{eq:FX} and \eqref{eq:gx}), we have
\begin{align}
a &=4\nonumber\\
b&=6\trace\left(\left(\tX\bE_{ij}^T+\bE_{ij}\tX^T\right)\bE_{ii}\right)\nonumber\\
&=12\trace\left(\tX\bE_{ij}^T\right)\label{eq:beq}\\
&=12\Xt_{ij}\nonumber
\end{align}
\begin{align}
c&=2\left(2\trace\left((\tX\tX^T-\bM)\bE_{ii}\right)+\left\Vert\tX\bE_{ij}^T+\bE_{ij}\tX^T\right\Vert^2\right)\nonumber\\
&=4(\tX\tX^T-\bM)_{ii}+4\trace(\tX^T\tX\bE_{jj})+4\trace\left(\tX\bE_{ij}^T\tX\bE_{ij}^T\right)\nonumber\\
&=4\left((\tX\tX^T-\bM)_{ii}+(\tX^T\tX)_{jj}+\Xt_{ij}^2\right)\label{eq:ceq}\\
d&=2\trace\left((\tX\tX^T-\bM)\left(\tX\bE_{ij}^T+\bE_{ij}\tX^T\right)\right)\nonumber\\
&=4\trace\left((\tX\tX^T-\bM)\tX\bE_{ij}^T\right)\nonumber\\
&=4((\tX\tX^T-\bM)\tX)_{ij} \label{eq:deq}
\end{align}
where we have used the facts $\bE_{ij}^T\bE_{ii}=\bE_{ij}^T$, $\trace(\bA\bE_{ij}^T)=A_{ij}$, $\trace(\bA\bE_{ij}^T\bA\bE_{ij}^T)=A_{ij}^2$, and $\bM=\bM^T$ in (\ref{eq:beq}-\ref{eq:deq}).

\section{Every nonzero stationary solution cannot be a local maximum}
\begin{theorem}
The proposed BSUM-type SNMF algorithms cannot get stuck in a local maximum.
\end{theorem}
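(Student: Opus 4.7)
The plan is to establish that every nonzero stationary point $\bX^*$ of problem \eqref{eq:SNMF} fails to be a local maximizer of $F$ on $\{\bX\geq\bzero\}$; since the BSUM-type algorithms proposed in Sections II--IV have all been shown to converge to stationary solutions of the basic SNMF problem, this immediately precludes them from settling at a nonzero local maximum.

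First I would invoke the KKT optimality conditions associated with the nonnegativity constraint. At a stationary $\bX^*$ they read $\bX^*\geq\bzero$, $\nabla F(\bX^*)\geq\bzero$, and the componentwise complementary slackness $(\nabla F(\bX^*))_{ij}X^*_{ij}=0$ for all $(i,j)$. Summing these entrywise equalities yields the inner-product identity $\langle\nabla F(\bX^*),\bX^*\rangle=0$. Using the closed-form gradient $\nabla F(\bX)=4(\bX\bX^T-\bM)\bX$ (which follows from the same expansion as in Appendix A), this identity collapses to
\begin{equation*}
\alpha \triangleq \|\bX^*\bX^{*T}\|^2 = \trace(\bM\bX^*\bX^{*T}).
\end{equation*}

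Next I would restrict $F$ to the feasible scaling ray $t\mapsto t\bX^*$ with $t\geq 0$, which stays nonnegative for every such $t$. Direct expansion gives
\begin{equation*}
F(t\bX^*) = \|\bM\|^2 - 2t^2\trace(\bM\bX^*\bX^{*T}) + t^4\|\bX^*\bX^{*T}\|^2,
\end{equation*}
and substituting the KKT identity above reduces the difference to the single term
\begin{equation*}
F(t\bX^*) - F(\bX^*) = \alpha(t^2-1)^2.
\end{equation*}
Because $\bX^*\neq\bzero$ implies $\bX^*\bX^{*T}\neq\bzero$ and hence $\alpha>0$, every $t\neq 1$ satisfies $F(t\bX^*)>F(\bX^*)$. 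In particular, as $t\uparrow 1$ one obtains feasible points arbitrarily close to $\bX^*$ at which $F$ strictly exceeds $F(\bX^*)$, which shows that $\bX^*$ is not a local maximum of $F$.

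The entire argument is essentially one-dimensional after projecting onto the scaling direction, so I do not foresee a real obstacle. The only delicate point worth emphasizing is that the exact equality form of complementary slackness, rather than a mere inequality produced by a single feasible direction, is precisely what collapses the quartic polynomial in $t$ to the nonnegative square $\alpha(t^2-1)^2$; without this exactness, the linear and cubic terms in $t-1$ would not cancel and the ascent conclusion could fail.
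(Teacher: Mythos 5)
Your argument is correct and takes a genuinely different, more elementary route than the paper. The paper vectorizes $\bX$ into $\by$, computes the gradient and Hessian of the quartic and quadratic parts $\psi_1,\psi_2$ of $F$, and shows via the stationarity consequence $\nabla F(\by)\geq 0$ that $\by^T\nabla^2F(\by)\by\geq 8\sum_i\bigl(\by^T\bU_i^T\bU_i\by\bigr)^2>0$, so the Hessian cannot be negative semidefinite at a nonzero stationary point. You instead restrict $F$ to the radial curve $t\mapsto t\bX^*$ and use the \emph{exact} complementarity identity $\langle\nabla F(\bX^*),\bX^*\rangle=0$ (the paper only uses the one-sided inequality $\by^T\nabla F(\by)\geq 0$) to collapse the quartic in $t$ to $F(t\bX^*)-F(\bX^*)=\alpha(t^2-1)^2$ with $\alpha=\Vert\bX^*\bX^{*T}\Vert^2>0$; I checked the expansion and the KKT identity, and both are right. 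The two proofs are secretly probing the same radial direction --- the paper's quadratic form $\by^T\nabla^2F(\by)\by$ is the second derivative of your function $t\mapsto F(t\bX^*)$ at $t=1$ --- but your version avoids Hessian computations entirely, yields a global statement along the whole ray rather than a local second-order one, and directly exhibits nearby feasible ascent points, which is the cleaner way to contradict local maximality.

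One caveat you share with the paper: the argument covers only \emph{nonzero} stationary points. The point $\bX=\bzero$ is stationary and can genuinely be a local maximum of $F$ over the nonnegative orthant (e.g., $n=r=1$, $M=1$ gives $F(x)=(1-x^2)^2$ with a local max at $x=0$); the paper disposes of this case with a one-line assertion that the algorithms "can escape" from $\bzero$, which is itself not obviously true if one initializes exactly at $\bzero$ (the sBSUM update then returns $x=0$). Since you scope your conclusion to nonzero local maxima, your proof is internally consistent, but to match the theorem as literally stated you would need to either exclude $\bX=\bzero$ explicitly or add an argument (e.g., about the initialization) for why the iterates never sit at the origin.
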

\begin{proof}
Clearly the proposed BSUM-based SNMF algorithms can escape from the stationary point $\bX=\bm{0}$. Thus we consider any nonzero stationary solution $\bX$ below, i.e., we have $\Vert\bX\Vert^2> 0$.

Let $\by$ denote the vectorization of $\bX$. So we can express each column of $\bX$ as $\bX_{:i}=\bU_i\by$, where $\bU_i\in\Rdom^{n\times nr}$ is a null matrix except the $i$-th block being an $n\times n$ identity matrix. That is, we have
$\bX=[\bU_1\by~\bU_2\by~\ldots~\bU_r\by].$
It follows that
\begin{align}\label{eq:y=X}
\bX\bX^T&=[\bU_1\by~\bU_2\by~\ldots~\bU_r\by][\bU_1\by~\bU_2\by~\ldots~\bU_r\by]^T\nonumber\\
&=\sum_{i=1}^r \bU_i\by\by^T\bU_i^T
\end{align}
and
\begin{equation}\label{eq:obj}
F(\by)\equiv F(\bX)=\psi_1(\by)-\psi_2(\by)+\Vert\bM\Vert^2
\end{equation}
where $\psi_1(\by)\triangleq\trace\left(\bX\bX^T\bX\bX^T\right)$ and $\psi_2(\by)\triangleq2\trace\left(\bX\bX^T\bM\right)$.

First, by the stationary condition $\nabla F(\by)^T(\bz-\by)\geq 0$, $\forall \bz\geq 0$ and the fact $\by\geq 0$, we obtain
$\nabla F(\by)\geq 0$.

Second, using Eq. \eqref{eq:y=X}, we have
\begin{align}
\psi_1(\by)&\triangleq\trace\left(\bX\bX^T\bX\bX^T\right)\nonumber\\
&=\trace\left(\left(\sum_{i=1}^r \bU_i\by\by^T\bU_i^T\right)^2\right)\nonumber\\
&=\trace\left(\sum_{i=1}^r\sum_{j=1}^r\bU_i\by\by^T\bU_i^T\bU_j\by\by^T\bU_j^T\right)\nonumber\\
&=\sum_{i=1}^r\sum_{j=1}^r\left(\by^T\bU_i^T\bU_j\by\right)\left(\by^T\bU_j^T\bU_i\by\right)
\end{align}
and
\begin{align}
\psi_2&(\by)\triangleq2\trace\left(\bX\bX^T\bM\right)\nonumber\\
&=2\trace\left([\bU_1\by~\bU_2\by~\ldots~\bU_r\by]^T\bM[\bU_1\by~\bU_2\by~\ldots~\bU_r\by]\right)\nonumber\\
&=2\sum_{i=1}^r\by^T\bU_i^T\bM\bU_i\by
\end{align}

Furthermore, evaluating the gradient of $\psi_1(\by)$ and $\psi_2(\by)$, we obtain
\begin{align}
\nabla&\psi_1(\by)=\sum_{i=1}^r\sum_{j=1}^r\left(\left(\by^T\bU_i^T\bU_j\by\right)\nabla\left(\by^T\bU_j^T\bU_i\by\right)\right.\nonumber\\
&~~~~~~~\left.+\left(\by^T\bU_j^T\bU_i\by\right)\nabla\left(\by^T\bU_i^T\bU_j\by\right)\right)\nonumber\\
&=\sum_{i=1}^r\sum_{j=1}^r\bigg(\left(\by^T\bU_i^T\bU_j\by\right)\left(\bU_i^T\bU_j+\bU_j^T\bU_i\right)\by\nonumber\\
&~~~~~~~~~~~~~+\left(\by^T\bU_j^T\bU_i\by\right)\left(\bU_i^T\bU_j+\bU_j^T\bU_i\right)\by\bigg)\label{eq:grad1}\\
&=\sum_{i=1}^r\sum_{j=1}^r\by^T\left(\bU_i^T\bU_j+\bU_j^T\bU_i\right)\by\left(\bU_i^T\bU_j+\bU_j^T\bU_i\right)\by\nonumber
\end{align}
where we have used the fact that $\nabla\left(\by^T\bA\by\right){=}(\bA{+}\bA^T)\by$, and
\begin{align}\label{eq:grad2}
\nabla\psi_2(\by)&=4\sum_{i=1}^r\bU_i^T\bM\bU_i\by
\end{align}
Further, the Hessain matrix of $\psi_1(\by)$ and $\psi_2(\by)$ are given by
\begin{align}
&\nabla^2\psi_1(\by)=\!\!\sum_{i=1}^r\sum_{j=1}^r\by^T\left(\bU_i^T\bU_j{+}\bU_j^T\bU_i\right)\by\left(\bU_i^T\bU_j{+}\bU_j^T\bU_i\right)\nonumber\\
&+2\!\sum_{i=1}^r\sum_{j=1}^r\left(\bU_i^T\bU_j{+}\bU_j^T\bU_i\right)\by\by^T\left(\bU_i^T\bU_j{+}\bU_j^T\bU_i\right)\label{eq:hessian1}\\
&\nabla^2\psi_2(\by)=4\sum_{i=1}^r\bU_i^T\bM\bU_i\label{eq:hessian2}
\end{align}

Using \eqref{eq:obj}, \eqref{eq:grad1}, \eqref{eq:grad2}, \eqref{eq:hessian1}, and \eqref{eq:hessian2}, we have
\begin{align}
\by^T&\nabla^2 f(\by)\by = \by^T\nabla^2\psi_1(\by)\by-\by^T\nabla^2\psi_2(\by))\by\nonumber\\
&=2\sum_{i=1}^r\sum_{j=1}^r\left(\by^T\left(\bU_i^T\bU_j+\bU_j^T\bU_i\right)\by\right)^2\nonumber\\
&~~~~~+\by^T\left(\nabla\psi_1(\by)-\nabla\psi_2(\by)\right)\nonumber\\
&\stackrel{(a)}{\geq} 2\sum_{i=1}^r\sum_{j=1}^r\left(\by^T\left(\bU_i^T\bU_j+\bU_j^T\bU_i\right)\by\right)^2\nonumber\\
&\geq 8\sum_{i=1}^r\left(\by^T\bU_i^T\bU_i\by\right)^2=8\Vert\bX\Vert^2>0\nonumber
\end{align}
where in (a) we have used the fact that $\by\geq 0$ and $\nabla F(\by)=\nabla\psi_1(\by)-\nabla\psi_2(\by)\geq 0$. This implies that the Hessian matrix $\nabla^2 F(\by)$ can only be either positive semidefinite (but not a null matrix) or indefinite. Therefore, the stationary point $\by$ cannot be a local maximum of $F$. Thus the proof is completed.
\end{proof}
\end{document}